\documentclass[11pt, letterpaper]{amsart}

\usepackage[english]{babel}
\usepackage{graphicx,xcolor}
\usepackage[all]{xy}
\usepackage{amsfonts,dsfont,mathrsfs}
\usepackage{amsmath, amsthm, amssymb, mathabx}
\usepackage{mathrsfs}
\usepackage{enumerate, url}
\usepackage[foot]{amsaddr}
\usepackage{fancyhdr}
\usepackage{hyperref}
\usepackage{ifthen,srcltx}
\usepackage{comment}
%\usepackage{relsize}
%\usepackage[pdftex,
%                paper=a4paper,
%                textheight=650pt]{geometry}

%local objects
\newcommand{\Y}{\ensuremath{\Upsilon}}

\newcommand{\G}{\ensuremath{\mathbb{G}}}
\newcommand{\oGL}{\ensuremath{\overline{\Gamma}^L}}
\newcommand{\oGH}{\ensuremath{\overline{\Gamma}^H}}
\newcommand{\oLL}{\ensuremath{\overline{\Lambda}^L}}
\newcommand{\oLH}{\ensuremath{\overline{\Lambda}^H}}

%quality of life
\def\bs{\backslash}

\newcommand{\sslash}{\mathbin{\mkern-3mu /\mkern-5mu/ \mkern-3mu}}

% \newcommand{\bigast}{\mathop{ \Huge \mathlarger{\ast}}}	

%standard symbols
%fonts
\newcommand{\bbA}{\ensuremath{\mathbb{A}}}
\newcommand{\bbC}{\ensuremath{\mathbb{C}}}

\newcommand{\bbF}{\ensuremath{\mathbb{F}}}
\newcommand{\bbG}{\ensuremath{\mathbb{G}}}

\newcommand{\bbQ}{\ensuremath{\mathbb{Q}}}
\newcommand{\bbR}{\ensuremath{\mathbb{R}}}

\newcommand{\bbZ}{\ensuremath{\mathbb{Z}}}

\newcommand{\calG}{\ensuremath{\mathcal{G}}}

%standard math	
\DeclareMathOperator{\Id}{Id}

\DeclareMathOperator{\Hom}{Hom}
\DeclareMathOperator{\Aut}{Aut}

\DeclareMathOperator{\GL}{GL}

\DeclareMathOperator{\SL}{SL}
\DeclareMathOperator{\PSL}{PSL}
\DeclareMathOperator{\Sp}{Sp}

\DeclareMathOperator{\SO}{SO}

\DeclareMathOperator{\SU}{SU}

\DeclareMathOperator{\diag}{diag}

\DeclareMathOperator{\Tor}{Tor}

\DeclareMathOperator{\Sym}{Sym}
\DeclareMathOperator{\res}{res}

\def\bs{\backslash}

%mahan

%\newcommand{\proj}{{\mathbb P}}

\newcommand\FF{{\mathcal F}}

\newcommand\LL{{\mathcal L}}
\newcommand\MM{{\mathcal M}}

\newcommand\PP{{\mathcal P}}

\newcommand\PMF{{\PP\kern-2pt\MM\FF}}

\newcommand\PML{{\PP\kern-2pt\MM\LL}}

\newcommand{\fsubd}{\mathrel{{\scriptstyle\searrow}\kern-1ex^d\kern0.5ex}}
\newcommand{\bsubd}{\mathrel{{\scriptstyle\swarrow}\kern-1.6ex^d\kern0.8ex}}
\newcommand{\fsubeq}{\mathrel{\raise-.7ex\hbox{$\overset{\searrow}{=}$}}}
\newcommand{\bsubeq}{\mathrel{\raise-.7ex\hbox{$\overset{\swarrow}{=}$}}}

\newcommand{\tsh}[1]{\left\{\kern-.9ex\left\{#1\right\}\kern-.9ex\right\}}

% \newcommand\Hleft{H\sleft}
% \newcommand\Hright{H\sright}

% \newcommand\Homega{\omega_H}

%colors

%header
\fancyhf{}
\fancyhead[RE]{\small\thepage}
\fancyhead[LO]{\small\thepage}
\fancyhead[CE]{\sc commensurators of normal subgroups}
\fancyhead[CO]{\sc fisher, mj, and van limbeek}
\pagestyle{fancy}

%environments
\newtheorem{thm}{Theorem}[section]
\newtheorem{prop}[thm]{Proposition}
\newtheorem{lemma}[thm]{Lemma}
\newtheorem{cor}[thm]{Corollary}
\theoremstyle{definition}
\newtheorem{dfn}[thm]{Definition}

%\newtheorem{defn}[theorem]{Definition}
%ß\newtheorem{rmk}[thm]{Remark}
\newtheorem{rem}[thm]{Remark}
\newtheorem{rems}[thm]{Remarks}

\newtheorem{ex}[thm]{Example}

\newtheorem{claim}[thm]{Claim}

\newtheorem{conj}[thm]{Conjecture}

\newtheorem{question}[thm]{Question}
\newtheorem{assume}[thm]{Standing assumptions}

\sloppy

%draft building

%\usepackage[notref,notcite]{showkeys}
%%\usepackage{showlabels}
%\usepackage{prelim2e}
%
%%COMMENT ENVIRONMENT
%
%
%\newsavebox{\commentbox}
%\newenvironment{mycomment}%
%% begin comment
%{\ifthenelse{\equal{\showcomments}{yes}}%
%% then begin comment in margin
%{\footnotemark
%        \begin{lrbox}{\commentbox}
%        \begin{minipage}[t]{1.25in}\raggedright\sffamily\tiny
%        \footnotemark[\arabic{footnote}]}
%% else eat contents of the environment
%{\begin{lrbox}{\commentbox}}}
%% end comment
%{\ifthenelse{\equal{\showcomments}{yes}}
%% then end comment
%{\end{minipage}\end{lrbox}\marginpar{\usebox{\commentbox}}}
%% else finish eating
%{\end{lrbox}}}
%
%\newcommand{\commentdavid}[1]{ \begin{mycomment} \textbf{(David)}: \textcolor{blue}{#1} \end{mycomment}}
%\newcommand{\commentmahan}[1]{ \begin{mycomment} \textbf{(Mahan)}:\textcolor{red}{#1} \end{mycomment}}
%\newcommand{\commentwouter}[1]{ \begin{mycomment} \textbf{(Wouter)}:\textcolor{orange}{#1} \end{mycomment}}
%
%%\newcommand{\showcomments}{yes}
%\renewcommand{\showcomments}{no}
%
%%END OF COMMENT ENV
%
%%todo environment
%\newcommand{\todo}[1]{\vspace{5mm}\par \noindent
%\framebox{\begin{minipage}[c]{0.95 \textwidth} \tt #1
%\end{minipage}} \vspace{5mm} \par}

\begin{document}

\title{Commensurators of normal subgroups of lattices}

\author[D. Fisher]{David Fisher}
\address{Department of Mathematics \\ Rice University \\ Houston, TX, USA }

\author[M. Mj]{Mahan Mj}
\address{School of Mathematics \\ Tata Institute of Fundamental Research \\ Mumbai, India}

\author[W. van Limbeek]{Wouter van Limbeek}
\address{Department of Mathematics, Statistics, and Computer Science \\
                 University of Illinois at Chicago \\
                 Chicago, IL, USA}

\date{\today}

\begin{abstract}
We study a question of Greenberg-Shalom concerning arithmeticity of discrete subgroups of semisimple Lie groups with dense commensurators. We answer this question positively for normal subgroups of lattices. This generalizes a result of the second author and T. Koberda for certain normal subgroups of arithmetic lattices in $\SO(n,1)$ and $\SU(n,1)$.
\end{abstract}

\maketitle

%toc
\setcounter{tocdepth}{1}
\numberwithin{equation}{section}
\tableofcontents
%toc

\section{Introduction}
\label{sec:intro}

\subsection{Main result}\label{sec:mainthm} Let $G$ be a real or $p$-adic semisimple Lie group with finite center and without compact factors, or a finite product of such groups.  More precisely, we consider $G=\G(k)$ where $k$ is a local field of characteristic zero and $\G$ is a semisimple algebraic group defined over $k$ and also products of groups of this type. Let $\Gamma\subseteq G$ be a discrete subgroup with commensurator $\Delta$. For lattices, a landmark theorem of Margulis shows that $\Delta$ detects arithmeticity of $\Gamma$:
\begin{thm}[Margulis] Let $G$ be as above and let $\Gamma$ be an irreducible lattice in $G$. Then $\Gamma$ is arithmetic if and only if $\Delta$ is dense in $G$.
\label{thm:margulis-comm}
\end{thm}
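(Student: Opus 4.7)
The plan is to handle the two directions separately: the forward direction is short, while the reverse direction rests on a deep theorem of Margulis.

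For the forward direction ($\Rightarrow$), assume $\Gamma$ is arithmetic. Then there exist a number field $k_0$, a connected semisimple $k_0$-group $\mathbf{H}$, and a continuous surjection with compact kernel $\pi: \mathrm{Res}_{k_0/\bbQ}(\mathbf{H})(\bbR) \to G$ such that $\pi(\mathbf{H}(\calO_{k_0}))$ is commensurable with $\Gamma$. For any $q \in \mathbf{H}(k_0)$, the intersection $q\mathbf{H}(\calO_{k_0})q^{-1} \cap \mathbf{H}(\calO_{k_0})$ has finite index in both factors (a standard property of $S$-arithmetic groups), so $\pi(\mathbf{H}(k_0)) \subseteq \Delta$. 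By weak approximation for connected semisimple groups over number fields, $\mathbf{H}(k_0)$ is dense in $\mathrm{Res}_{k_0/\bbQ}(\mathbf{H})(\bbR)$, whence $\Delta$ is dense in $G$.

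For the reverse direction ($\Leftarrow$), the central tool is Margulis's commensurator superrigidity theorem: if $\Delta$ is dense in $G$ and $\rho: \Delta \to \mathbf{H}(k')$ is a homomorphism into the $k'$-points of an adjoint absolutely simple algebraic group over a local field $k'$ with Zariski dense and unbounded image, then $\rho$ extends continuously to a homomorphism $G \to \mathbf{H}(k')$. Granting this, I would deduce arithmeticity as follows. Fix a faithful rational embedding $G \hookrightarrow \GL_n$, consider the trace field $k_0$ of $\Ad(\Gamma)$, and for each nontrivial Galois embedding $\sigma: k_0 \hookrightarrow \overline{\bbQ}$ and each place $v$ of $k_0$, study the Galois-conjugated representation $\Gamma \to \mathbf{G}^\sigma(k_{0,v})$. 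Commensurator superrigidity forces each such image to be relatively compact: otherwise the representation would extend through $\Delta$ to a continuous unbounded homomorphism of $G$, contradicting either the absence of compact factors (archimedean case) or the irreducibility and discreteness of $\Gamma$ (non-archimedean case). A standard descent of matrix entries then shows that $k_0$ is a number field and realizes $\Gamma$, up to commensurability, inside the $\calO_{k_0}$-integer points of a $k_0$-form of $G$, exhibiting $\Gamma$ as arithmetic.

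The principal obstacle is commensurator superrigidity itself. Its proof proceeds through Furstenberg-boundary methods: one first uses Margulis's boundary-map machinery to construct a $\Gamma$-equivariant measurable map $\partial G \to \mathbf{H}/P$ from $\rho|_\Gamma$; density of $\Delta$ in $G$, combined with Mackey-style ergodicity on $G/\Gamma$, then promotes this to a $\Delta$-equivariant and ultimately essentially $G$-equivariant map; finally one recovers the continuous extension by unipotent dynamics and Howe--Moore. The translation from this superrigidity statement to an explicit arithmetic model is intricate but by now standard in the literature.
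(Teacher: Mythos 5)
The paper does not prove Theorem \ref{thm:margulis-comm}; it is quoted as a known ``landmark theorem of Margulis'' and then used as a black box. So there is no proof in the text to compare against, and the right reading of this item is: this is background, cite Margulis (or Zimmer's or Margulis's book) and move on.

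That said, your sketch is a reasonable compression of the standard argument, with two imprecisions worth fixing. First, the setting of the theorem in this paper allows $G$ to be a finite product of real and $p$-adic groups, so the arithmetic model in the forward direction should be stated $S$-arithmetically: $\pi$ should be a surjection with compact kernel from $\prod_{v\in S}\mathbf{H}(k_{0,v})$ onto $G$, with $\pi(\mathbf{H}(\calO_{k_0,S}))$ commensurable with $\Gamma$, and one checks that $\mathbf{H}(k_0)$ commensurates $\mathbf{H}(\calO_{k_0,S})$ and is dense by weak approximation; your version treats only the archimedean case. Second, in the reverse direction, the reason the Galois-conjugated representations $\Gamma \to \mathbf{G}^\sigma(k_{0,v})$ at non-archimedean places $v$ must have bounded image is not ``irreducibility and discreteness of $\Gamma$''; it is that commensurator superrigidity would extend $\rho$ to a continuous homomorphism from the connected group $G$ (or its identity component's archimedean factors) to a totally disconnected group, and any such homomorphism with Zariski dense image is trivial — contradicting unboundedness. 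With those adjustments your outline matches the classical proof (Margulis, Zimmer's \emph{Ergodic Theory and Semisimple Groups}, or Margulis's \emph{Discrete Subgroups}), but for the purposes of this paper one should simply cite it.
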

It is natural to speculate that this theorem holds more generally, assuming only that $\Gamma$ is discrete and Zariski dense.  This question was first asked by Greenberg for $G=\SO(n,1)$ in $1974$ and later asked more generally by Shalom \cite{greenberg-comm, mj-comm}.

\begin{question}[{Greenberg-Shalom}] Let $G$ be a semisimple Lie group with finite center and without compact factors. Suppose $\Gamma\subseteq G$ is a discrete, Zariski-dense subgroup of $G$ whose commensurator $\Delta\subseteq G$ is dense. Is $\Gamma$ an arithmetic lattice in $G$?
\label{q:shalom}\end{question}

Greenberg has given a positive answer for finitely generated subgroups of $G=\SL(2,\bbR)$ \cite{greenberg-comm}. Building on work by Leininger-Long-Reid \cite{llr-comm}, Mj gave a positive answer for finitely generated subgroups of $\SL(2,\bbC)$ \cite{mj-comm}.  Koberda-Mj have given a positive answer for normal subgroups $\Gamma$ of lattices in rank $1$ which have infinite abelian quotients \cite{koberda-mj0,koberda-mj}.  We remark that for tree lattices, a counterexample to a result comparable to Koberda-Mj is already implicit in \cite[Proposition 8.1]{BurgerMozes}.  For all other cases of $G$, as well as general infinitely generated subgroups of the above, Question \ref{q:shalom} is open. Greenberg-Shalom's question is closely related to a number of other problems which will be discussed in Section \ref{sec:connect} below.

It is known that any group $\Gamma$ as in Question \ref{q:shalom} has full limit set. This was observed by Greenberg in $G=\SO(n,1)$ and Mj in general \cite{greenberg-comm, mj-comm}. Normal subgroups of lattices are a robust source of groups with full limit set that seem very good candidates for having large commensurators, especially characteristic subgroups.  Our main result is a positive answer to Question \ref{q:shalom} for normal subgroups of lattices:

\begin{thm} Let $G$ be a finite product of almost simple algebraic groups defined over local fields of characteristic zero, and let $\Lambda\subseteq G$ be an irreducible lattice. Suppose $\Gamma\subseteq \Lambda$ is an infinite normal subgroup with dense commensurator $\Delta\subseteq G$. Then $\Gamma$ is an arithmetic lattice and has finite index in $\Lambda$.
\label{thm:main}\end{thm}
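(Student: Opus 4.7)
The starting point is a useful reduction: if one can prove $[\Lambda:\Gamma]<\infty$, then $\Gamma$ is itself an irreducible lattice in $G$ whose commensurator contains $\Delta$ and is therefore dense, and Margulis's commensurator criterion (Theorem~\ref{thm:margulis-comm}) immediately yields arithmeticity of $\Gamma$. Thus the entire theorem reduces to showing $[\Lambda:\Gamma]<\infty$. As preparation, note that $\Lambda$ normalizes $\Gamma$, so $\Lambda\subseteq\Delta$; moreover $\Lambda$ is Zariski dense in $G$ by Borel density, so the Zariski closure of the infinite normal subgroup $\Gamma$ is a Zariski-dense normal subgroup of $G$. Semisimplicity and the absence of compact factors then force $\Gamma$ itself to be Zariski dense, and hence to have full limit set in the geometric boundary of $G$, as recorded by Greenberg and Mj.

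I would then split on the rank of $G$. If $G$ is higher-rank (some almost simple factor has $k$-rank $\geq 2$, or $G$ has at least two simple factors), then Margulis's Normal Subgroup Theorem applies to the irreducible lattice $\Lambda$: every infinite normal subgroup is of finite index, so $[\Lambda:\Gamma]<\infty$ directly, and combining this with the reduction above settles the theorem in the higher-rank case.

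The main case is when $G$ is a single almost simple group of $k$-rank one, where Margulis's Normal Subgroup Theorem fails in principle (rank-one lattices admit many infinite normal subgroups of infinite index, e.g.\ kernels of maps onto free or surface groups). Here one has to genuinely exploit the density of $\Delta$. My plan is to use $\Delta$ to manufacture a dense pseudo-group of Hecke-type correspondences on the locally symmetric (or Bruhat--Tits) quotient $X_\Gamma := \Gamma\backslash G/K$, and to couple this with the full-limit-set property to force $X_\Gamma$ to have finite volume. A natural route is through Patterson--Sullivan theory: the $\Delta$-equivariance of the conformal density of $\Gamma$, together with density of $\Delta$ on the geometric boundary, should pin the Patterson--Sullivan measure to the round measure, forcing the critical exponent of $\Gamma$ to be maximal and of divergence type, which in rank one implies that $\Gamma$ is a lattice.

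The principal obstacle is precisely this last step: converting the algebraic hypothesis that $\Delta$ is dense in $G$ into a sharp geometric-measure-theoretic statement strong enough to rule out infinite covolume of $\Gamma$. I expect one must use the normality hypothesis crucially, leveraging the fact that $\Lambda\subseteq\Delta$ acts on $X_\Gamma$ by honest isometries (not merely by correspondences), to bootstrap density of commensurators into finiteness of covolume and thereby close the argument.
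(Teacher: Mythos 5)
Your reduction to proving $[\Lambda:\Gamma]<\infty$ (via Margulis's commensurator criterion applied afterward to the lattice $\Gamma$) and your dispatch of the higher-rank case via Margulis's Normal Subgroup Theorem both match the paper. The divergence is entirely in the rank-one case, and there your proposal has a genuine gap that is not merely unfinished but built on a false premise.

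The key step you propose — that density of $\Delta$ forces the Patterson--Sullivan measure of $\Gamma$ to be the round measure, so that $\Gamma$ has maximal critical exponent and is of divergence type, and that ``in rank one this implies $\Gamma$ is a lattice'' — is not a valid implication. Infinite-index normal subgroups of rank-one lattices with amenable quotients (for instance $\mathbb{Z}$- or $\mathbb{Z}^2$-covers of finite-volume hyperbolic manifolds) already have full limit set, critical exponent equal to the volume entropy, and in low-dimensional abelian cases are even of divergence type with ergodic geodesic flow (Rees), yet they have infinite covolume. Full limit set, and even ``full critical exponent of divergence type,'' simply do not see the difference between a lattice and such a normal subgroup. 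Moreover, density of $\Delta$ does not by itself upgrade a conformal density to the round measure: the round measure is already attained by these counterexamples, so there is no contradiction to extract. Thus the proposed Patterson--Sullivan route has no purchase on the precise dichotomy (finite index vs.\ infinite index) that needs to be resolved, and the ``principal obstacle'' you flag is in fact where the argument fails, not merely where it is incomplete.

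The paper's actual mechanism in the rank-one case is algebraic and quite different. It first shows that $\Delta$ commensurates $\Lambda$ as well as $\Gamma$ (so $\Lambda$ is arithmetic and $\Delta$ can be taken $S$-arithmetic), then forms the relative (Schlichting) completions $L=\Delta\sslash\Gamma$ and $H=\Delta\sslash\Lambda$ with the natural map $q:L\to H$, and studies the kernel $N$. After quotienting by the normal closure $C$ of $\oGL\cap N$, it shows the resulting extension of $H$ by $N/C$ is (up to finite index) central, computes the relevant continuous cohomology $H^2_c$ to be torsion via Casselman--Wigner vanishing and the Normal Subgroup Theorem, concludes $N/C$ is finite, and then proves $C$ is compact by a ``inner modulo compact'' argument; compactness of $N$ then yields $[\Lambda:\Gamma]<\infty$. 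If you want to salvage a geometric strategy, you would need a property of $\Gamma$ that genuinely distinguishes lattices from their infinite-covolume normal subgroups and that is forced by the dense commensurator — geometric finiteness is the natural candidate, but there is no known mechanism by which density of $\Delta$ yields geometric finiteness of $\Gamma$ in this generality, which is precisely why the paper routes through totally disconnected group cohomology instead.
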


We compare this briefly to the results of Koberda-Mj which inspire it.  For their work to apply $\Lambda/\Gamma$ must have
an infinite abelian quotient.  When $G$ and therefore $\Lambda$ have property $(T)$, this never occurs, so our result is new for all normal subgroups of lattices in $G=\Sp(n,1)$ or $F_4^{-20}$.  Furthermore if $\Lambda/\Gamma$ is free (or more generally has a free quotient) it follows from standard constructions of random groups that one can build intermediate normal subgroups $\Gamma \lhd \Gamma' \lhd \Lambda$ such that $\Lambda/\Gamma'$ has property $(T)$ and hence no abelian quotients.  This type of example occurs robustly even in the case $G=\SL(2, \mathbb{R})$ so our results cover a plethora of new examples even when $\Lambda$ is free or a surface group.

Our theorem also implies a suitable version for reducible lattices, see Theorem \ref{thm:reducible}. With a few modifications to the proof, we obtain the same result when $G$ is defined over a local field with positive characteristic when $\Lambda$ is either uniform or arithmetic, see Theorem \ref{thm:char+}. If $G$ has higher rank and $\Lambda$ is irreducible, then $\Gamma$ has finite index in $\Lambda$ by Margulis' Normal Subgroups theorem.  We note here that we do not give a  new proof of that theorem and we use it in our proof.

\subsection{Outline of proofs}\label{sec:outline} We start by proving that $\Delta$ also commensurates $\Lambda$ (Section \ref{sec:arithm}). The rest of the proof is based on the study of a relative profinite completion $L$ of $\Delta$ with respect to its commensurated subgroup $\Gamma$, the similarly defined relative completion $H$ of $\Delta$ with respect to $\Lambda$, and the natural map $L\to H$ between them.

We study the kernel $N$ of this map in several steps. First we quotient by the normal closure $C$ (in $L$) of $\overline{\Gamma}\cap N$, and we show that $L/C\to H$ is (up to finite index) a central extension (Subsection \ref{sec:restrict}). Then we study the (continuous) cohomology class of this extension in $H^2(H,N/C)$, and we show it must be torsion. This readily implies that $N/C$ must be finite (Section \ref{sec:abquot}).

Finally, in Section \ref{sec:end}, we use a direct argument to show that $C$ is compact and since $N/C$ is finite, $N$ is also compact. This readily implies that $\Lambda/\Gamma$ is finite.

In Section \ref{sec:char+}, we discuss the required modifications for the proof in positive characteristic. The proof is largely the same, except where we use finite generation of $\Lambda$. Instead, we use a structural result of Lubotzky that in positive characteristic, a nonuniform lattice is a free product of a free subgroup and lattices in unipotent groups. This description is precise enough that it allows us to circumvent the use of finite generation.

In Section \ref{sec:venky} we establish a technical result on subgroups of $S$-arithmetic lattices, which generalizes a result of Venkataramana (with essentially the same proof). This result is used in the proof of the Main Theorem \ref{thm:main}.

Finally, in Section \ref{sec:reducible}, we use the Main Theorem to deduce its generalization to reducible lattices.

\subsection{Motivations and connections}
\label{sec:connect}

In this section we explain some connections between Greenberg-Shalom's Question \ref{q:shalom} and other well-known problems. In the first subsection we recall a conjecture of Margulis and Zimmer and explain Shalom's original motivation for raising the question.  In the second subsection, we point to a newer set of connections which to the best of our knowledge have not previously appeared in the literature. We only sketch these and leave further details to our forthcoming article with Nic Brody \cite{BFMVL}.

\subsubsection{Margulis-Zimmer Conjecture}
\label{subsec:mz}

A major motivation for Greenberg-Shalom's question is the following conjecture advertised by Margulis-Zimmer in the late '70s, seeking to classify commensurated subgroups of higher rank lattices $\Lambda$. Here we say $\Gamma\subseteq\Lambda$ is \emph{commensurated} if $\Lambda\subseteq \text{Comm}_G(\Gamma)$.

\begin{conj}[{Margulis-Zimmer, see \cite{shalom-willis}}] Let $\G$ be a semisimple algebraic group defined over a global field $k$ and $S$ a finite set of valuations of $k$. Assume $\G$ has higher $S$-rank. Then any commensurated subgroup of the $S$-arithmetic lattice $\Lambda$ is either finite or $S'$-arithmetic for some $S'\subseteq S$.
\label{conj:mz}
\end{conj}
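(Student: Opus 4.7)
The plan is to reduce the claim that $[\Lambda:\Gamma]<\infty$ to a compactness statement for the kernel of a map between two relative profinite completions. First I would show that $\Delta$ commensurates not only $\Gamma$ but also the ambient lattice $\Lambda$. This uses the normality of $\Gamma$ in $\Lambda$: for $\delta\in\Delta$, the finite-index subgroup $\Gamma':=\Gamma\cap\delta\Gamma\delta^{-1}$ is Zariski dense in $G$ (an infinite normal subgroup of an irreducible lattice has full limit set and is Zariski dense), and both $\Lambda$ and $\delta\Lambda\delta^{-1}$ normalize $\Gamma'$ up to finite index, which forces them to be commensurable inside the normalizer. Having established $\Delta\subseteq\Comm_G(\Lambda)$, density of $\Delta$ in $G$ makes $\Comm_G(\Lambda)$ dense, and the Margulis arithmeticity theorem (Theorem~\ref{thm:margulis-comm}) already yields arithmeticity of $\Lambda$; the theorem therefore reduces to showing $\Gamma$ has finite index in $\Lambda$. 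I then form the Schlichting (relative profinite) completions $L$ of $(\Delta,\Gamma)$ and $H$ of $(\Delta,\Lambda)$: these are locally compact groups into which $\Delta$ embeds densely, and in which $\overline{\Gamma}$ and $\overline{\Lambda}$ are compact open. The inclusion $\Gamma\subseteq\Lambda$ induces a continuous surjection $\pi\colon L\to H$, and a standard identification shows that compactness of $N:=\ker\pi$ is equivalent to $[\overline{\Lambda}:\overline{\Gamma}]<\infty$, which in turn gives $[\Lambda:\Gamma]<\infty$.

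The analysis of $N$ is the heart of the argument, and I would carry it out in two layers. Let $C\trianglelefteq L$ be the closed normal subgroup generated by $\overline\Gamma\cap N$; since $\overline\Gamma$ is compact, so is $\overline\Gamma\cap N$. The aim is to analyze the extension
\[ 1\to N/C\to L/C\to H\to 1.\]
The first key claim is that, after passing to an open finite-index subgroup of $H$, this extension is central. The point is that $C$ absorbs precisely the part of $N$ on which $\overline\Gamma$ acts non-trivially by conjugation, so $\overline\Gamma\cdot C$ centralizes $N/C$, and density propagates this to all of $L/C$. Once central, the extension is classified by a continuous cohomology class $[\omega]\in H^2_c(H,N/C)$ with trivial coefficients. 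The second key claim is that $[\omega]$ is torsion, which, combined with the profinite-like flavor of $N/C$ (as a subquotient of data controlled by $\overline{\Lambda}/\overline{\Gamma}$), will force $N/C$ to be finite.

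The main obstacle I anticipate is the torsion claim for $[\omega]$. Producing enough relations in $L/C$ to bound the cocycle requires exploiting finite generation of $\Lambda$ together with the dense embedding $\Lambda\hookrightarrow H$, and converting Margulis-style bounded-generation or commutator estimates into cohomological vanishing. In positive characteristic, where $\Lambda$ may fail to be finitely generated, this argument must be replaced by an analysis using Lubotzky's structure theorem expressing $\Lambda$ as a free product of a free group with unipotent lattices. Once $N/C$ is known to be finite, it remains to show $C$ itself is compact. For this I would argue directly, studying $L$-conjugates of the compact set $\overline\Gamma\cap N$: using density of $\Delta$ in $L$ one reduces to conjugation by $\delta\in\Delta$, and the commensuration relation defining $L$ together with normality of $\Gamma$ in $\Lambda$ confines these conjugates to a single compact neighborhood. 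With $C$ compact and $N/C$ finite, $N$ is compact, so $[\Lambda:\Gamma]<\infty$; combined with the arithmeticity of $\Lambda$ from the first step, this gives arithmeticity of $\Gamma$ and completes the proof.
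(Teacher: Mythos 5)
The statement you have been asked to prove is the Margulis--Zimmer conjecture, which is precisely that: an open \emph{conjecture}. The paper does not prove it --- it only cites the known partial results (Venkataramana, Shalom--Willis) and presents its own main theorem as fresh partial evidence. What you have written is a reasonably faithful outline of the paper's proof of its Main Theorem (Theorem~\ref{thm:main}), which is a very different and much more special statement, so let me explain why it does not establish Conjecture~\ref{conj:mz}.

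The fatal gap is the normality hypothesis. Your argument relies on $\Gamma$ being \emph{normal} in a lattice $\Lambda$ at nearly every step: in the opening reduction ($\Gamma\cap\delta\Gamma\delta^{-1}$ is normalized by both $\Lambda$ and $\Lambda^\delta$, forcing commensurability), in the assertion that $\overline\Gamma^L$ is normal in $\overline\Lambda^L$, in the identification $N/(\overline\Gamma^L\cap N)\cong\Lambda/\Gamma$, in the claim that $C$ centralizes $N/C$, and in the final compactness argument for $C$. But Conjecture~\ref{conj:mz} posits only a \emph{commensurated} subgroup $\Gamma$ of an $S$-arithmetic lattice $\Lambda$, with no normality at all; when one passes to the dynamical picture via Proposition~\ref{prop:shalomtoirreducible}, the resulting discrete subgroup of the Archimedean factor is in general neither normal in, nor even contained in, any lattice. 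Without a lattice $\Lambda$ normalizing $\Gamma$ there is no Schlichting completion $H=\Delta\sslash\Lambda$ to map $L=\Delta\sslash\Gamma$ onto, and the entire central-extension analysis collapses. Additionally, your conclusion $[\Lambda:\Gamma]<\infty$ is not the conclusion of the conjecture, which allows $\Gamma$ to be $S'$-arithmetic for a \emph{proper} subset $S'\subsetneq S$ (e.g.\ $\SL(n,\bbZ)\subset\SL(n,\bbZ[1/p])$, which has infinite index). In short: you have reproduced the strategy for the paper's Theorem~\ref{thm:main}, but that strategy does not touch Conjecture~\ref{conj:mz}, and indeed no proof of the conjecture is known.
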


Here, a global field is a number field or a finite extension of a function field $\bbF_p(t)$, and the $S$\emph{-rank} of $\G$ is the sum of the $k_{\nu}$-ranks over all valuations $\nu\in S$, and $\G$ is said to have \emph{higher }$S$\emph{-rank} if its $S$-rank is at least 2.

The connection to Greenberg-Shalom's question occurs as soon as there are at least two valuations, and one of them is non-Archimedean.
To be most transparent, we assume $\Lambda = \SL(n, \bbZ[1/p])$, but this explanation easily generalizes.  Let $\Gamma < \Lambda$ be a commensurated subgroup contained in $\SL(n, \bbZ)$.
Then $\Gamma$ is a discrete subgroup of $\SL(n,\bbR)$ and in the latter group, its commensurator $\Lambda$ is dense. The Margulis-Zimmer conjecture in this case predicts that $\Gamma$ is either finite, or of finite index in $\SL(n,\mathbb{Z})$.

Using well-chosen unipotent generating sets, Venkataramana has proven Conjecture \ref{conj:mz} for arithmetic lattices $\Gamma=\G(\bbZ)$ in simple groups defined over $\bbQ$ \cite{venkataramana-mz}. Shalom-Willis have proved Conjecture \ref{conj:mz} in more instances, including the first that are not simple \cite{shalom-willis}. Their proof crucially relies on fine arithmetic properties for lattices in these groups, namely bounded generation by unipotents. This is the only prior work on Conjecture \ref{conj:mz}. Our results provide some new evidence for the Margulis-Zimmer conjecture.  In particular, it is the first non-trivial result towards the Margulis-Zimmer conjecture that applies to cocompact lattices.  We state a special case to make the contribution clear:

\begin{cor}
\label{corollary:marguliszimmer}
Let $G$ be a simple Lie group of rank 1,  $\Lambda < G$ an arithmetic lattice, $S$ a non-empty finite set of finite places and $\Lambda_S$ the $S$-arithmetic lattice obtained by inverting primes in $S$.  Then any subgroup  $\Gamma< \Lambda$ that is commensurated by $\Lambda_S$ and normalized by $\Lambda$ is finite index in $\Lambda$.
\end{cor}

\noindent The hypothesis that $\Gamma$ is normalized by $\Lambda$ and commensurated by $\Lambda_S$ makes the statement intermediate between the Margulis normal subgroup theorem, where $\Gamma$ is normal in all of $\Lambda_S$, and the Margulis-Zimmer conjecture where no normality is assumed.

\subsubsection{Irreducible subgroups of products}
\label{subsec:products}

In this subsection, we provide some indications of results with Nic Brody that will appear later in \cite{BFMVL}.
The general motivation here is to study discrete subgroups of products that  project indiscretely to the factors.
The only published reference we know  that mentions questions of this type is \cite{FLSS}, though that article
only mentions much weaker questions than the following:

\begin{question}
\label{qtn:irreducible}
Let $G_1$ and $G_2$ be semisimple groups over local fields and $\Lambda < G_1 \times G_2$ a discrete subgroup with both projections dense.  Is $\Lambda$ in fact an irreducible lattice in the product?
\end{question}

At first glance, this question seems overly strong. We state an easy proposition that shows that it is a natural
generalization of Greenberg-Shalom's question.

\begin{prop}
\label{prop:shalomtoirreducible}
Let $G_1$ and $G_2$ be semisimple algebraic groups over local fields with $G_2$ totally disconnected.  Let $\Lambda < G_1 \times G_2$ be discrete with both projections dense.  Then there is a subgroup $\Gamma <\Lambda$ which projects discretely to $G_1$
and $\Gamma$ is commensurated by $\Lambda$.
\end{prop}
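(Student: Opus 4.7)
The plan is to exploit the fact that $G_2$, being a totally disconnected locally compact group, admits a neighborhood basis at the identity of compact open subgroups. Fixing any such compact open subgroup $U \subseteq G_2$, I would take
\[
\Gamma := \Lambda \cap (G_1 \times U).
\]
Note that the density hypothesis on the projections of $\Lambda$ is not actually used for the two conclusions stated in the proposition; it is of course essential when one further wants $\pi_1(\Gamma) \subseteq G_1$ to have dense commensurator, which is the point of reducing Question~\ref{qtn:irreducible} to Question~\ref{q:shalom}.

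Discreteness of $\pi_1(\Gamma) \subseteq G_1$ follows from compactness of $U$: the projection $G_1 \times U \to G_1$ is a proper map, and since $\Gamma$ is discrete and closed in $G_1 \times G_2$, it is also discrete and closed in $G_1 \times U$. For any compact $K \subseteq G_1$, the set $\Gamma \cap (K \times U)$ is both compact and discrete, hence finite, so $\pi_1(\Gamma) \cap K$ is finite for every compact $K$, proving discreteness.

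For commensuration, given $\lambda = (a,b) \in \Lambda$, a direct computation yields $\lambda \Gamma \lambda^{-1} = \Lambda \cap (G_1 \times bUb^{-1})$. The intersection $V := U \cap bUb^{-1}$ is a compact open subgroup of finite index in both $U$ and $bUb^{-1}$, and $\Gamma \cap \lambda\Gamma\lambda^{-1} = \Lambda \cap (G_1 \times V)$. Composing the second projection $\pi_2$ with passage to the coset space $U/V$ gives a set map $\Gamma \to U/V$ whose fiber over the identity coset is precisely $\Gamma \cap \lambda\Gamma\lambda^{-1}$, so every fiber is either empty or a coset of $\Gamma \cap \lambda\Gamma\lambda^{-1}$, yielding
\[
[\Gamma : \Gamma \cap \lambda\Gamma\lambda^{-1}] \leq [U : V] < \infty.
\]
The symmetric argument applied to $\lambda\Gamma\lambda^{-1}$ via $bUb^{-1}/V$ bounds the other index. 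I do not anticipate any genuine obstacle; the whole argument is a mechanical exploitation of the compact open subgroup structure, and the role of the total disconnectedness hypothesis on $G_2$ is exactly to produce $U$.
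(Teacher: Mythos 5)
Your proof is correct and takes exactly the approach the paper sketches: the paper likewise defines $\Gamma$ as the intersection of $\Lambda$ with (the preimage of) a compact open subgroup of $G_2$ and notes that commensuration and discreteness of the projection follow. Your observation that the algebraic structure is irrelevant and only total disconnectedness of $G_2$ is needed to supply the compact open subgroup is also made verbatim in the paper.
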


The proposition is easy to prove by picking $\Gamma$ to be the intersection of $\Lambda$ with a compact open subgroup
of $G_2$. In fact the proposition really only requires that $G_1$ and $G_2$ be locally compact topological groups with
$G_2$ Hausdorff and totally disconnected (then $G_2$ contains a compact open subgroup).  We note here that in the proof of
Proposition \ref{prop:shalomtoirreducible}, the group $\Gamma$ we construct is most likely infinitely generated and
not normal in any subgroup of $\Lambda$.  So no existing work on Greenberg-Shalom's question \ref{q:shalom} bears directly on Question \ref{qtn:irreducible}.

The difficulty in constructing counterexamples is to guarantee the density of projections.  Of course if either
projection is discrete, $\Lambda$ is best studied as a discrete subgroup of that factor rather than the product.  So it is natural to assume the projections are indiscrete, in which case it is often also easy to prove they are dense.  This will allow
us to see that a number of natural questions would be answered by a positive answer to Greenberg-Shalom's question.  These include

\begin{enumerate}
  \item the nonexistence of surface groups as discrete subgroups of products of rank one $p$-adic groups; this has applications to the arithmetic of three-manifold groups \cite{FLSS};
  \item the nonexistence of discrete free groups with dense projections in many products, answering a question asked by Yves Benoist;
  \item an old question of Lyndon and Ullman on groups generated by parabolics in $\SL(2,\bbR)$, recently made a conjecture by Kim and Koberda \cite{lyndon-ullman, kim-koberda-notfree}.
\end{enumerate}

Detailed explanations of all these connections and some others will appear in the forthcoming paper with Brody \cite{BFMVL}.

\subsection*{Acknowledgments} We thank Michael Larsen, Gopal Prasad, M.S. Raghunathan, Yehuda Shalom and T. N. Venkataramana  for helpful conversations. We particularly thank Thomas Koberda for his involvement in the early phases of this project and Nic Brody for the conversations that lead to \cite{BFMVL}.  DF is supported by NSF grant DMS-1906107.  MM is supported by the Department of Atomic Energy, Government of India, under project no.12-R\&D-TFR-5.01-0500;
and in part by a DST JC Bose Fellowship,  and an endowment from the Infosys Foundation. WvL is supported by NSF DMS-1855371 and DMS-DMS-2203867 and the Max Planck Institute for Mathematics. All three authors were supported by NSF DMS-1928930 while participating in the Fall 2020 semester program hosted by the Mathematical Sciences Research Institute in Berkeley, California.

\section{Proofs}\label{sec:proof}

\subsection{Schlichting completions}\label{sec:schlichting-def} We start by introducing a main tool in our proofs, so-called Schlichting completions. See \cite[Section 3]{shalom-willis} for more details on the facts reviewed here.

\begin{dfn} Let $\Y$ be a countable group and $\Theta<\Y$ a commensurated subgroup. The \emph{Schlichting completion} $\Y\sslash\Theta$ is defined to be the closure of $\Y<\text{Sym}(\Y/\Theta)$ in the topology of pointwise convergence. Here $\text{Sym}(X)$ denotes the set of all bijections $X\to X$.\end{dfn}

Thus $\Y\sslash\Theta$ is a totally disconnected, locally compact, Hausdorff, second countable group. Furthermore, $\Y$ maps onto a dense subgroup of $\Y\sslash\Theta$ with kernel the normal core of $\Theta$ in $\Y$, and the image of $\Theta$ in $\Y\sslash\Theta$ has compact open closure. If in addition $\Y$ is finitely generated, then $\Y\sslash\Theta$ is compactly generated.

\begin{ex} The Schlichting completion of $\Y:=\PSL(n,\bbZ[1/p])$ with respect to $\Theta:=\PSL(n,\bbZ)$ is $\Y\sslash\Theta=\PSL(n,\bbQ_p)$. One may see this by considering the $\PSL(n,\bbQ_p)$-action on its Bruhat-Tits building $X$. There exists $x_0\in X$ (corresponding to the standard order) with stabilizer $\PSL(n,\bbZ_p)$. Since $\Y=\PSL(n,\bbZ[1/p])\subseteq\PSL(n,\bbQ_p)$ is dense and $\Y\cap \PSL(n,\bbZ_p)=\Theta$, we can therefore identify $\Y/\Theta$ with a $\PSL(n,\bbQ_p)$-orbit in $X$. It follows that the closure of $\Y$ in the topology of pointwise convergence on $\Y/\Theta$ is given by $\PSL(n,\bbQ_p)$.
%For us, $\Delta$ is $S$-arithmetic, and $\Delta\sslash\Lambda$ has finite index in $\bbG(k_S)$.
\label{ex:schlichting-arithmetic}
\end{ex}

\subsection{Background regarding algebraic groups} We will need a generalization of the above Example \ref{ex:schlichting-arithmetic} that computes Schlichting completions of general $S$-arithmetic lattices in semisimple Lie groups. To this end, we start by recalling some facts about algebraic groups, especially regarding the image of the universal cover. A general reference is \cite[Sections 1.1 and 1.2]{margulis-book}.

Let $\bbG$ be a connected, algebraic group defined over a field $k$. The subgroup $\bbG(k)^+\subseteq \bbG(k)$ denotes the group generated by unipotent radicals of parabolic $k$-subgroups of $\bbG(k)$.

\begin{rem} If the field $k$ is perfect, then $\bbG(k)^+$ coincides with the group generated by all unipotents of $\bbG(k)$. In particular, this is true if char($k$)=0. However, note that function fields over finite fields are not perfect. If $k=\bbR$, then $\bbG(k)^+$ is the connected component of identity of $\bbG(k)$ (Borel-Tits \cite[Proposition 6.14]{borel-tits-morphismes-abstraits}).\end{rem}
The subgroup $\bbG(k)^+$ is compatible with central isogenies (see also \cite[Proposition I.1.5.5]{margulis-book}):
\begin{prop}[{Borel-Tits \cite[Corollary 6.3]{borel-tits-morphismes-abstraits}}] Let $\pi:\bbG'\to \bbG$ be a central $k$-isogeny of semisimple groups defined over $k$. Then $\pi(\bbG'(k)^+)=\bbG(k)^+$. \label{prop:plus-isogeny}\end{prop}
Clearly $\bbG(k)^+\subseteq \bbG(k)$ is normal. Therefore if $\bbG$ is semisimple and contains parabolic subgroups, we can expect $\bbG(k)^+$ will be very large. Indeed, we have:
\begin{thm}[{Platonov \cite{platonov-ktconj}}] Let $\bbG$ be a connected and simply-connected, semisimple, algebraic group defined over a local field $k$ without $k$-anisotropic factors. Then $\bbG(k)^+=\bbG(k)$.

In particular, if $\bbG$ is not necessarily simply-connected but otherwise satisfies the above assumptions, and $\pi:\widetilde{\bbG}\to\bbG$ denotes its universal cover, then $\bbG(k)^+=\pi(\widetilde{\bbG}(k))$.
	\label{thm:ktconj}
		\end{thm}

\subsection{Schlichting completions of arithmetic lattices}\label{sec:notation-arithmetic} We briefly introduce notation for ($S$-)arithmetic lattices in algebraic groups.  Let $\bbG$ be an algebraic group defined over a global field $k$ (i.e. a number field if char$(k)$=0, or a finite extension of $\bbF_q(t)$ if char$(k)>0$). Let $V$ denote the set of all places of $k$, and let $\bbA$ denote the adeles over $k$. We fix a $k$-rational embedding $\bbG\hookrightarrow \GL(N)$ for some $N$, and use this to define $\bbG(\bbA):=\Pi'_{v\in V} \bbG(k_v)$ as the restricted product of all (inequivalent) completions of $k$. For a finite set of places $S\subseteq V$, likewise define $\bbG(\bbA_S):=\Pi'_{v\notin S} \bbG(k_v)$ as the restricted product away from $S$. Also set $G_S:=\prod_{s\in S} \bbG(k_s)$.

Let $S\subseteq V$ be a nonempty finite set of places (containing all archimedean places if char$(k)=0$), and let $U\subseteq \bbG(\bbA_S)$ be a compact open subgroup. Then $\Lambda_{S,U}:=\bbG(k)\cap U$ projects to a lattice in $G_S$. Note that the commensurability class of $\Lambda_{S,U}$ is independent of $U$. Any subgroup $\Lambda$ of $G_S$ commensurable to $\Lambda_{S,U}$ is called an $S$-\emph{arithmetic} lattice. If $S$ consists precisely of all the archimedean valuations of $k$, we simply say $\Lambda$ is \emph{arithmetic}.

The goal of the rest of this subsection is the following generalization of Example \ref{ex:schlichting-arithmetic} that computes the Schlichting completion of an $S$-arithmetic lattice with respect to a $T$-arithmetic sublattice (where $T\subsetneq S$):

\begin{prop} Let $\bbG$ be a connected, almost $k$-simple, adjoint algebraic group defined over a global field $k$. Let $T\subsetneq S$ be finite sets of places of $k$ (containing all archimedean places if char($k$)=0) such that there exists at least one place in $T$ and one place in $S\bs T$ at which $\bbG$ is noncompact. Fix a compact open subgroup
	$$U_T=U_{S\bs T} \times U_S\subseteq \bbG(\bbA_T)=G_{S\bs T}\times \bbG(\bbA_S).$$
Set $\Y:=\Lambda_{S,U_S}$ and $\Theta:=\Lambda_{T,U_T}$.

Then $\Y$ commensurates $\Theta$ and $\Y\sslash\Theta$ is (isomorphic to) the closure of $\Y$ in $G_{S\bs T}^{\text{is}}:=G_{S\bs T}/G_{S\bs T}^{\text{an}}$ (the quotient of $G_{S\bs T}$ by the product of all its anisotropic factors $G_{S\bs T}^{\text{an}}$), and the isomorphism $\Y\sslash\Theta\to\overline{\Y}$ restricts to identity on $\Y$. Further, $\overline{\Y}\subseteq G_{S\bs T}^{\text{is}}$ is closed, normal and cocompact, and the compact quotient $G_{S\bs T}^{\text{is}}/\overline{\Y}$ is abelian and has bounded exponent. If char$(k)=0$, the quotient is finite.
\label{prop:schlichting-arithmetic}\end{prop}
\begin{rem} The assumption that there exists a place in $T$ at which $\bbG$ is noncompact guarantees that $\Theta$ is infinite. Likewise, the existence of such a place in $S\bs T$ implies that $\Theta\subseteq \Y$ has infinite index.\label{rem:infty}\end{rem}
%\begin{rem} The assumption that $G_s$ is noncompact for $s\in S\bs T$ is not necessary for a statement of this type: Removal of any places from $S$ where $G_s$ is compact and corresponding modifications of $U_S$ and $U_{S\bs T}$, change $\Y$ to a finite index subgroup while leaving $\Theta$ unchanged, and therefore the above result computes $\Y\sslash\Theta$ up to a finite index subgroup. We do not need this slightly more general statement.\end{rem}
\begin{proof} Since $G_{S\bs T}\times U_S$ commensurates its compact open subgroup $U_T$, and the relation of commensurating a subgroup is preserved under taking intersections, we have that $\Y=\bbG(k)\cap (G_{S\bs T}\times U_S)$ commensurates $\Theta=\bbG(k)\cap U_T$. Next, we recall a few general results about $S$-arithmetic lattices that when combined show that the closure of $\Y$ in $G_{S\bs T}^{\text{is}}$ is cocompact (finite index if char$(k)=0$), normal, with quotient that is abelian with bounded exponent.

Let $\pi_{S\bs T}:\widetilde{G}_{S\bs T}\to G_{S\bs T}$ be the universal cover, and likewise define $\pi_{S\bs T}^{\text{is}}$. Recall that the image of $\pi_{S\bs T}^{\text{is}}$ is closed, normal, and cocompact and the quotient $G_{S\bs T}^{\text{is}}/\pi_{S\bs T}^{\text{is}}(\widetilde{G}_{S\bs T}^{\text{is}})$ is abelian with bounded exponent. If char$(k)=0$, then the quotient is finite. These properties are due to Borel-Tits \cite[3.19-20 and 6.3]{borel-tits-morphismes-abstraits} for $(G_{S\bs T}^{\text{is}})^+$ instead of $\pi_{S\bs T}^{\text{is}}(\widetilde{G}_{S\bs T}^{\text{is}})$, and, since $G_{S\bs T}^{\text{is}}$ does not have any anisotropic factors,  $(G_{S\bs T}^{\text{is}})^+ = \pi(\widetilde{G}_{S\bs T}^{\text{is}})$ by Platonov's Theorem \ref{thm:ktconj}.

%, see also \cite[I.1.5.5 and I.2.3.4]{margulis-book}.

Therefore it suffices to show that $\overline{\Y}$ (closure taken in $G_{S\bs T}^{\text{is}}$) contains $(G_{S\bs T}^{\text{is}})^+=\pi_{S\bs T}^{\text{is}}(\widetilde{G}_{S\bs T}^{\text{is}})$. Set $\widetilde{\Y}:=\pi_{S\bs T}^{-1}(\Y)$. Then $\widetilde{\Y}$ is commensurable with the $S$-integers in $\widetilde{\bbG}(k)$ \cite[I.3.2.9]{margulis-book}, which are dense in $\widetilde{G}_{S\bs T}$ by strong approximation (due to Platonov in characteristic zero \cite{platonov-strongapprox} and Prasad in positive characteristic \cite{prasad-strongapprox}; these references state strong approximation in terms of the closure of $k$-rational points in the adeles, but see \cite[Proposition 7.2(2)]{platonov-rapinchuk-book} for a reformulation in terms of $S$-integers). We note here that strong approximation applies because $\widetilde{\bbG}$ is simply-connected and $\widetilde{G}_S$ is noncompact.

It follows that the closure of $\pi_{S\bs T}^{-1}(\Y)$ has finite index in $\widetilde{G}_{S\bs T}$. But $\widetilde{G}_{S\bs T}^+$ does not have proper subgroups of finite index (Borel-Tits \cite[6.7]{borel-tits-morphismes-abstraits}) so the closure of $\pi_{S\bs T}^{-1}(\Y)$ contains $\widetilde{G}_{S\bs T}^+$, and hence the closure of $\Y\subseteq G_{S\bs T}^{\text{is}}$ contains $(G_{S\bs T}^{\text{is}})^+$, as desired.

It remains to compute that the Schlichting completion $\Y\sslash\Theta$ is given by the closure $\overline{\Y}$ of $\Y$ in $G_{S\bs T}^{\text{is}}$. For this, we need to find a faithful action of $\Y$ on a discrete set whose point-stabilizers are (commensurable to) $\Theta$. We use the action on the Bruhat-Tits building $X_{S\bs T}$ of $G_{S\bs T}^{\text{is}}$. Since $G_{S\bs T}^{\text{is}}$ acts continuously and properly on $X_{S\bs T}$, its point-stabilizers are compact and open, and hence commensurable to any compact open subgroup. In particular, the point-stabilizers are commensurable to the image $U_{S\bs T}^{\text{is}}$ of $U_{S\bs T}\subseteq G_{S\bs T}$ in $G_{S\bs T}^{\text{is}}$. It follows that the point-stabilizers in $\Y$ are commensurable to $\Y\cap U_{S\bs T}^{\text{is}}$, which has finite index in (and hence is commensurable to) $\Theta$. Fix any vertex $x_0$ and let $\Sigma$ be the point-stabilizer (in $\Y$) of this vertex. Since $\Sigma$ and $\Theta$ are commensurable subgroups of $\Y$ and Schlichting completions only depend on the commensurability class of the subgroup, we have $\Y\sslash\Theta\cong \Y\sslash\Sigma$, and the isomorphism restricts to identity on $\Y$. We will now show that $\Y\sslash\Sigma$ is (isomorphic to) the closure of $\overline{\Y}\subseteq G_{S\bs T}^{\text{is}}$ via an isomorphism that restricts to identity on $\Y$.

 The action of $G_{S\bs T}^{\text{is}}$ on $X_{S\bs T}$ gives a continuous map $\varphi:G_{S\bs T}^{\text{is}}\to \Sym(X_{S\bs T})$. The orbit $\Y x_0$, which we can identify with $\Y/\Sigma$, is invariant under the closure $\overline{\Y}$ of $\Y$ in $G_{S\bs T}^{\text{is}}$, so that $\varphi$ restricts to a continuous map $\overline{\Y}\to \Sym(\Y/\Sigma)$. By continuity, the image is contained in the closure of $\Y\subseteq\Sym(\Y/\Sigma)$, which is $\Y\sslash\Sigma$. Hence we have a continuous surjective map $\overline{\Y}\to\Y\sslash\Sigma$ which restricts to identity on $\Y$. It remains to show this is a homeomorphism.

First we note that it is injective: Its kernel is a normal subgroup of $\overline{\Y}$ and hence is normalized by $(G_{S\bs T}^{\text{is}})^+$. By a result of Tits \cite{tits-simple}, any subgroup normalized by $(G_{S\bs T}^{\text{is}})^+$ is either central in $G_{S\bs T}^{\text{is}}$ or contains $(\widetilde{G}_{S\bs T}^{\text{is}})^+$. The latter option is impossible: Then the kernel would be cocompact, so that $\Sigma\subseteq\Y$ would be finite index, contradicting that $\Sigma$ is $T$-arithmetic and $\Y$ is $S$-arithmetic and existence of a place in $G_{S\bs T}$ at which $\bbG$ is noncompact. Hence $\overline{\Y}\to\Y\sslash\Sigma$ has kernel contained in the center of $G_{S\bs T}^{\text{is}}$. However, since $\bbG$ is adjoint, the center of $G_{S\bs T}^{\text{is}}$ is trivial, so that $\overline{\Y}\to\Y\sslash\Sigma$ is injective. Finally, since $\overline{\Y}$ acts properly on $X_{S\bs T}$, the map $\overline{\Y}\to \Sym(\Y / \Sigma)$ is not just continuous and injective, but a homeomorphism onto its image. \end{proof}

\subsection{Cayley-Abels graphs}\label{sec:ca-graphs} In the case that $\Y$ is finitely generated, $\Y\sslash\Theta$ can be realized as a group of automorphisms of a locally finite graph, namely its Cayley-Abels graph (with respect to a compact generating set). Its construction is due to Abels \cite{abels-cayley}. Our discussion follows \cite[11.3]{monod-book}. Let $H$ be a totally disconnected, compactly generated Hausdorff topological group. Then $H$ has a compact open subgroup $K$. We fix a bi-$K$-invariant compact symmetric generating set $C$. Then the Cayley-Abels graph $\calG=\calG(H,C, K)$ of $H$ with respect to $C$ and $K$ is defined to have vertex set $H/K$ and edges between $hK$ and $hcK$ for all $h\in H$ and $c\in C$. By compactness of $C$, this graph is $d$-regular for $d=|C/K|<\infty$, and since $C$ generates $H$, it is connected. Further, $H$ acts on $\calG$ by graph automorphisms with vertex stabilizers given by conjugates of $K$. In particular, this action is isometric and hence proper.

If $\Y$ is a finitely generated group with commensurated subgroup $\Theta$, then $K:=\overline{\Theta}^H$ is a compact open subgroup. With this choice, the Cayley-Abels graph has vertex set $H/K=\Y/\Theta$.

\subsection{Arithmeticity of $\Lambda$}\label{sec:arithm} As remarked in the introduction, since $\Gamma$ is normal in $\Lambda$ and $\Lambda$ is an irreducible lattice in $G$, Margulis' Normal Subgroups Theorem allows us to assume that $G$ is a rank one simple group. Further, we can pass to $G/Z(G)$ and assume that $G$ has trivial center. We make these assumptions without further comment for the rest of the paper. We start by proving:
\begin{prop} Under the assumptions of Theorem \ref{thm:main}, $\Delta$ commensurates $\Lambda$. In particular, $\Lambda$ is $T$-arithmetic for some finite set of places $T$ of $k$.
\label{prop:Lambda-arithm}
\end{prop}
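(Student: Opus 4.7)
The plan is to show that the normalizer $N_G(\Gamma)$ is a lattice containing $\Lambda$ with finite index, and then for each $\delta \in \Delta$ to use a ``lattice-sandwich'' argument via the normalizer of a common finite-index subgroup of $\Gamma$ and $\delta\Gamma\delta^{-1}$ to transfer commensuration of $\Gamma$ to commensuration of $\Lambda$.

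First I would invoke the Borel density theorem to see that $\Gamma$ is Zariski dense in $G$: an infinite normal subgroup of a lattice in a semisimple group without compact factors is always Zariski dense. Next, $N := N_G(\Gamma)$ is discrete, since its identity component $N^0$ is a connected Lie subgroup acting continuously on the discrete set $\Gamma$ by conjugation and therefore acting trivially; hence $N^0$ centralizes $\Gamma$, by Zariski density centralizes $G$, so $N^0 \subseteq Z(G)$ is finite and therefore $N^0$ is trivial. Since $N$ is discrete and contains the lattice $\Lambda$, the group $N$ is itself a lattice in $G$ and $[N:\Lambda] < \infty$.

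Now fix $\delta \in \Delta$ and set $\Gamma_0 := \Gamma \cap \delta\Gamma\delta^{-1}$, which is of finite index in both $\Gamma$ and $\delta\Gamma\delta^{-1}$ since $\delta \in \Comm_G(\Gamma)$. The subgroup $\Gamma_0$ is again Zariski dense, so the same argument shows $N_G(\Gamma_0)$ is discrete. The heart of the proof is to verify that the stabilizers $N_\Lambda(\Gamma_0)$ and $N_{\delta\Lambda\delta^{-1}}(\Gamma_0)$ have finite index in $\Lambda$ and $\delta\Lambda\delta^{-1}$ respectively; granting this, $N_G(\Gamma_0)$ contains a lattice, hence is itself a lattice, so $N_\Lambda(\Gamma_0)$ and $N_{\delta\Lambda\delta^{-1}}(\Gamma_0)$ are finite-index subgroups of this common ambient lattice. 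Their intersection is therefore finite-index in each, forcing $\Lambda \cap \delta\Lambda\delta^{-1}$ to have finite index in both $\Lambda$ and $\delta\Lambda\delta^{-1}$, so $\delta$ commensurates $\Lambda$.

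The main obstacle is the finite-index claim for $N_\Lambda(\Gamma_0)$. The natural approach is that $\Lambda$ acts by conjugation on $\Gamma$, permuting the subgroups of $\Gamma$ of index $[\Gamma:\Gamma_0]$; by Hall's theorem this set is finite when $\Gamma$ is finitely generated, giving a finite $\Lambda$-orbit of $\Gamma_0$ and hence a finite-index stabilizer. When $\Gamma$ is not known a priori to be finitely generated, one leverages instead the finite generation of $\Lambda$ itself (available in characteristic zero for rank-one $G$), for instance by intersecting $\Gamma_0$ with its conjugates by a finite generating set of $\Lambda$ to produce a common $\Lambda$-normalized finite-index subgroup. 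Once $\Delta \subseteq \Comm_G(\Lambda)$ is established, density of $\Delta$ transfers to $\Comm_G(\Lambda)$, and Margulis' Theorem~\ref{thm:margulis-comm} immediately implies that $\Lambda$ is arithmetic.
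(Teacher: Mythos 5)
Your high-level strategy is the same as the paper's: for each $\delta\in\Delta$, show that finite-index subgroups of both $\Lambda$ and $\Lambda^\delta$ normalize $\Gamma\cap\Gamma^\delta$, note that the normalizer of this discrete Zariski-dense subgroup is itself discrete, and deduce that $\Lambda$ and $\Lambda^\delta$ sit as finite-index subgroups of a common lattice. The preliminary observation that $N_G(\Gamma)$ is a lattice is a harmless digression, and the closing ``lattice-sandwich'' step is essentially the paper's argument. Both proofs hinge on the same crux, which you correctly identify: showing that $N_\Lambda(\Gamma\cap\Gamma^\delta)$ has finite index in $\Lambda$.

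The gap is in your treatment of that crux when $\Gamma$ is not finitely generated (which is precisely the interesting case; e.g.\ kernels of homomorphisms from rank-one lattices to $\bbZ$ are typically infinitely generated). You propose ``intersecting $\Gamma_0$ with its conjugates by a finite generating set of $\Lambda$ to produce a common $\Lambda$-normalized finite-index subgroup,'' but a single round of intersecting with conjugates by a generating set $S$ does not yield a subgroup normalized by $S$: if $\Gamma_1:=\Gamma_0\cap\bigcap_{s\in S}s\Gamma_0 s^{-1}$, then $s\Gamma_1 s^{-1}$ involves the conjugates $s s'\Gamma_0 (s s')^{-1}$, which are not among the ones you intersected. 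Iterating produces a decreasing chain whose indices may grow without bound when $\Gamma$ is infinitely generated, so there is no a priori reason for it to stabilize at a finite-index subgroup. The paper's actual mechanism is subtler: it works in the Cayley-Abels graph of $\Lambda_\delta\sslash\Gamma$ (using finite generation of $\Lambda_\delta=\langle\Lambda,\delta\rangle$, not of $\Gamma$), takes $\Gamma'$ to be the pointwise stabilizer of the ball of radius $R=d(e\Gamma,\delta\Gamma)$, and then invokes the key observation that the stabilizer of a vertex $\delta\lambda\Gamma$ is $\Gamma^{\delta\lambda}=\Gamma^{\delta}$, i.e.\ point stabilizers are constant on the fibers of $\Lambda_\delta/\Gamma\to\Lambda_\delta/\Lambda$. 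This lets one replace the ball (not $\Lambda$-invariant) by the union of all fibers it meets (which is $\Lambda$-invariant) without changing the pointwise stabilizer, and $\Lambda$-invariance of that set is what forces $\Lambda$ to normalize $\Gamma'$. That observation — that $\Lambda$-normality of $\Gamma$ makes stabilizers descend to $\Lambda_\delta/\Lambda$ — is the idea your proposal is missing, and without it the finite-index claim for $N_\Lambda(\Gamma_0)$ does not follow.
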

\begin{proof} The second part follows from the first by Margulis' commensurator rigidity theorem. For the first claim, let $\delta\in\Delta$. We claim that $\Lambda$ normalizes a finite index subgroup of $\Gamma\cap\Gamma^\delta$. This is clear if $\Gamma$ is finitely generated (so that it only admits finitely many subgroups of a given index), but in general we argue as follows:

The group $\Lambda_\delta:=\langle \Lambda,\delta\rangle$ is finitely generated and has commensurated subgroup $\Gamma$. Fix a Cayley-Abels graph of $\Lambda_\delta\sslash\Gamma$ with vertex set $\Lambda_\delta/\Gamma$. Then $\Gamma\cap\Gamma^\delta$ is the pointwise stabilizer in $\Lambda_\delta$ of $\{e\Lambda_\delta,\delta\Lambda_\delta\}\subseteq \Lambda_\delta/\Gamma$. Let $R$ be the distance between these points with respect to the graph metric on the Cayley-Abels graph, and let $\Gamma'$ be the pointwise stabilizer of the ball of radius $R$ centered at $e\Lambda_\delta\in \Lambda_\delta/\Gamma$.

Note that the stabilizer (in $\Delta$) of a point in $\Delta/\Gamma$ only depends on its image in $\Delta/\Lambda$: Indeed, if $\delta\in\Delta$ and $\lambda\in\Lambda$, then the stabilizer of $\delta\Gamma$ is just $\Gamma^\delta$, whereas the stabilizer of $\delta\lambda\Gamma$ is $\Gamma^{\delta\lambda}=\Gamma^\delta$. Therefore $\Gamma'$ is just the pointwise stabilizer of all fibers that are within distance $R$ of $\Lambda/\Gamma$ (which is the fiber over $e\Lambda\in\Lambda_\delta/\Lambda$), and since this set of fibers is $\Lambda$-invariant, $\Lambda$ normalizes $\Gamma'$.

Since $\Lambda$ normalizes $\Gamma$ and $\Gamma'$, we can consider the conjugation action $\Lambda\to\Aut(\Gamma/\Gamma')$. Since $\Aut(\Gamma/\Gamma')$ is finite, a finite index subgroup $\Lambda'\subseteq\Lambda$ leaves $(\Gamma\cap\Gamma^\delta)/\Gamma'$ invariant, so that $\Lambda'$ normalizes $\Gamma\cap\Gamma^\delta$. Likewise, there is a finite index subgroup $\Lambda''\subseteq \Lambda^\delta$ that normalizes $\Gamma\cap\Gamma^\delta$. However, since $\Gamma\cap\Gamma^\delta$ is a discrete Zariski-dense subgroup of $G$, its normalizer $N_G(\Gamma\cap\Gamma^\delta)$ is also discrete, and hence so is its subgroup $\langle \Lambda',\Lambda'' \rangle$. Since $\langle \Lambda', \Lambda''\rangle$ contains a lattice and is discrete, it is itself a lattice. Therefore $\Lambda'$ and $\Lambda''$ are both finite index in $\langle \Lambda',\Lambda''\rangle$ and hence are commensurable.\end{proof}
$T$-arithmeticity of $\Lambda$ means there exists a a connected, semisimple, algebraic group $\bbG$ defined over a number field $k$, and a finite set of places $T$ of $k$ (containing all archimedean places) such that $\Lambda$ is $T$-arithmetic in $\bbG(k)$. Since $G$ has trivial center, $\bbG$ is adjoint. Since $\Lambda$ is an irreducible lattice, $\bbG$ is $k$-almost simple.

$T$-Arithmeticity of $\Lambda$ lets us reduce to the case where $\Delta$ is an $S$-arithmetic lattice by way of the following generalization of a result of Venkataramana (see \cite[Proposition 2.3]{lubotzky-zimmer}):
\begin{lemma} Let $\Theta\subseteq \bbG(k)$ be a subgroup containing $\Lambda$ whose projection to $\bbG(k_s)$ is bounded for almost all places $s$, and let $S$ be the (finite) set of places where the projection of $\Theta$ is unbounded. Then $\Theta$ is $S$-arithmetic.
\label{lem:gen-venky}\end{lemma}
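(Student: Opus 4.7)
The plan is to adapt Venkataramana's argument from \cite[Prop.~2.3]{lubotzky-zimmer}, which treats the case $k = \bbQ$, to the present setting of a general global field $k$ of characteristic zero. The strategy is in two parts: first show that $\Theta$ is contained in $\bbG(\mathcal{O}_S)$ up to commensurability, and then show that $\Theta$ has finite index in $\bbG(\mathcal{O}_S)$.

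For the containment step, Proposition \ref{prop:Lambda-arithm} lets us assume that $\Lambda$ is commensurable with $\bbG(\mathcal{O}_{S_0})$, where $S_0 = \{v\}$ is the place such that $G = \bbG(k_v)$ (recall we have already reduced to $G$ rank-one simple). In particular $\pi_s(\Lambda) \subseteq \bbG(\mathcal{O}_s)$ for all $s \notin S_0$, and since $\Lambda$ is Zariski dense and $\bbG$ is almost simple, strong approximation gives that $\overline{\pi_s(\Lambda)}$ is an open subgroup of $\bbG(\mathcal{O}_s)$ for almost all finite $s$. For $s \notin S$ the closure $\overline{\pi_s(\Theta)}$ is compact in $\bbG(k_s)$ and contains this open subgroup. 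By Bruhat-Tits theory, for almost all $s$ any compact subgroup of $\bbG(k_s)$ containing an open subgroup of $\bbG(\mathcal{O}_s)$ is a finite-index extension of $\bbG(\mathcal{O}_s)$. Thus $\pi_s(\Theta) \subseteq \bbG(\mathcal{O}_s)$ up to finite index for almost all $s \notin S$, so after absorbing the finitely many remaining ``bad'' places into $S$, we may assume $\Theta \subseteq \bbG(\mathcal{O}_S)$ up to commensurability. In particular $\Theta$ is a discrete subgroup of $\bbG(k_S)$.

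For the finite-index step we must show the discrete subgroup $\Theta \subseteq \bbG(k_S)$ is actually a lattice, equivalently of finite index in $\bbG(\mathcal{O}_S)$. The key input is Venkataramana's unipotent-generation trick: at each $s \in S \setminus S_0$, unboundedness of $\pi_s(\Theta)$, combined with the Zariski density inherited from $\Lambda$, forces $\Theta$ to contain elements whose $s$-adic projection is unipotent of large norm in a given root subgroup. Together with the unipotents already in the arithmetic lattice $\Lambda$, this produces enough unipotent elements inside $\Theta \cap \bbG(\mathcal{O}_S)$ to invoke the standard theorem that $S$-arithmetic groups are boundedly generated by unipotent one-parameter subgroups, yielding a finite-index subgroup of $\bbG(\mathcal{O}_S)$ contained in $\Theta$ and hence $S$-arithmeticity of $\Theta$. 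The principal technical point is to verify that each ingredient---strong approximation, the Bruhat-Tits classification of bounded subgroups, and unipotent bounded generation---extends from the $\bbQ$-setting to a general global field $k$ of characteristic zero; all three hold in this generality, so the proof is essentially identical to Venkataramana's.
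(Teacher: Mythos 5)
Your proposal takes a genuinely different route from the paper, and the second half of it rests on a theorem that is not available in the generality the lemma requires.

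The paper's proof does not pass through bounded generation by unipotents at all. Instead, it sets $K_S := \overline{\Lambda}$ inside $\bbG(k_S)$, uses weak approximation applied to the simply-connected cover to show $K_S$ is compact open in a cocompact normal subgroup $G_S \subseteq \bbG(k_S)$, and then shows directly that the set $F \times K_S$ (with $F$ a $\Lambda$-fundamental domain in $G$) is a fundamental domain for the diagonal copy of $\Theta$ in $G \times G_S$. Disjointness of translates falls out of $\Theta \cap K_S = \Lambda$, and the covering property reduces to showing $\Theta$ is dense in $G_S$. That density is the real content, and it is obtained by a soft structural argument: the closure of $\pi_s(\Theta)$ in $\bbG(k_s)$ is open and unbounded, so its preimage in $\widetilde{\bbG}(k_s)$ contains $\widetilde{\bbG}(k_s)^+$ by an unpublished result of Tits (proved by Prasad), which equals $\widetilde{\bbG}(k_s)$ by the Kneser--Tits conjecture (Platonov), and whose image is cocompact by Borel--Tits; an induction on $|S|$ finishes.

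Your proposal replaces this with the two-step scheme ``contain $\Theta$ in $\bbG(\mathcal{O}_S)$ up to commensurability, then use bounded generation by unipotents to force finite index.'' The containment step is roughly fine (though you should be careful that ``compact and contains an open subgroup of $\bbG(\mathcal{O}_s)$'' only gives commensurability with $\bbG(\mathcal{O}_s)$, not containment in a finite-index overgroup of it; for almost all $s$ this is absorbed by the hyperspecial uniqueness up to conjugacy, but it needs saying). The genuine gap is the finite-index step. First, ``unboundedness at $s$ plus Zariski density forces unipotent elements of large $s$-adic norm'' is asserted but not justified: unbounded subgroups of a $p$-adic group need not contain any nontrivial unipotents, and the standard way to produce them (conjugating a fixed unipotent of $\Lambda$ by an $s$-adically large element and passing to commutators) has to be carried out explicitly and depends on the root system over $k_s$. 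Second, and more seriously, the claim that $S$-arithmetic groups are ``boundedly generated by unipotent one-parameter subgroups'' is not a standard theorem in the generality the lemma requires. It is known for split and quasi-split higher-rank groups (Tavgen, Erovenko--Rapinchuk, and others), but it is open for general $k$-anisotropic forms, and in positive characteristic (which the lemma explicitly allows and Section 3 of the paper relies on) it is known only for very special groups. The paper's fundamental-domain argument is designed precisely to avoid this dependency, which is why it works uniformly across all characteristics and all isotropy types. As written, your proof does not close, and it cannot be closed simply by ``checking that the three ingredients extend,'' because one of them does not.
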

Venkataramana's result is stated in \cite{lubotzky-zimmer} in characteristic zero and under the assumption that $S$ contains an archimedean place at which $\bbG$ is noncompact. The proof of the above lemma is rather technical and we postpone it until Section \ref{sec:venky}.

\begin{rem} By the above lemma, any finitely generated subgroup of $\Delta$ containing $\Lambda$ is $S$-arithmetic for some finite set of places $S$. As soon as such a subgroup has unbounded projection to a place not in $T$, we have $T\subsetneq S$. Such a place exists because otherwise any finitely generated subgroup of $\Delta$ has bounded projection to all places outside of $T$ and hence would be $T$-arithmetic. Hence $\Lambda$ would have finite index in any finitely generated subgroup of $\Delta$ containing it, which contradicts that $\Lambda\subseteq G$ is discrete and $\Delta\subseteq G$ is dense.

We will now replace $\Delta$ by a finitely generated $S$-arithmetic subgroup containing $\Lambda$, so that henceforth $\Delta$ is $S$-arithmetic for some finite set of places  $S\supsetneq T$. Note that there exists a place in $T$ at which $\bbG$ is noncompact because $\bbG$ has $T$-rank 1, and $\bbG$ is noncompact at every place in $S\bs T$ because $\Delta$ has unbounded projections at those places. In particular, we are in the situation of Proposition \ref{prop:schlichting-arithmetic}, and hence $\Delta\sslash\Lambda$ is given by the closure of $\Delta$ in $G_{S\bs T}^{\text{is}}$, and is a normal, finite index subgroup of $G_{S\bs T}^{\text{is}}$.
\label{rem:one-prime}\end{rem}

\subsection{Maps of descent and restriction} \label{sec:restrict} Let $L:=\Delta \sslash \Gamma$ be the completion of $\Delta$ with respect to $\Gamma$ (i.e. the closure in the topology of pointwise convergence of $\Delta$ acting on $\Delta / \Gamma$). Likewise let $H:=\Delta\sslash\Lambda$ be the completion of $\Delta$ with respect to $\Lambda$. Note that $L$ and $H$ are locally compact, second countable groups. Further $\oGL$ (the closure of $\Gamma$ in $L$) and $\oLH$ are compact open subgroups (since they are isotropy groups of points in the coset spaces).

The construction of Schlichting completions is functorial with respect to the inclusion $\Gamma\hookrightarrow \Lambda$:

\begin{prop} There is a continuous surjection $q:L\to H$ such that the canonical projection $\pi: \Delta / \Gamma\to \Delta / \Lambda$ is $q$-equivariant.
\label{prop:LtoH}\end{prop}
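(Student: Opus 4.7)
The plan is to construct $q$ directly by descending the $L$-action on $\Delta/\Gamma$ to an action on $\Delta/\Lambda$ through the fiber map $\pi$.

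First I would observe that $\pi: \Delta/\Gamma \to \Delta/\Lambda$ is $\Delta$-equivariant, so every $\delta\in\Delta$, acting by left multiplication on $\Delta/\Gamma$, sends fibers of $\pi$ to fibers of $\pi$. The collection $F$ of fiber-preserving permutations is closed in $\text{Symm}(\Delta/\Gamma)$ under pointwise convergence: if $g_n\to g$ with $g_n\in F$ and $\pi(x)=\pi(y)$, then eventually $g_n(x)=g(x)$ and $g_n(y)=g(y)$ lie in a common fiber, whence $\pi(g(x))=\pi(g(y))$. Since $L$ is the closure of $\Delta$ in $\text{Symm}(\Delta/\Gamma)$, every $g\in L$ is fiber-preserving.

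Next, I would define $q: L\to \text{Symm}(\Delta/\Lambda)$ by $q(g)(\delta\Lambda):=\pi(g(\delta\Gamma))$, well-defined by the previous paragraph. Verifying that $q$ is a group homomorphism and that $\pi$ is $q$-equivariant by construction is routine. For continuity: if $g_n\to g$ in $L$ and $\delta\Lambda\in\Delta/\Lambda$, pick any lift $\delta\Gamma$; then $g_n(\delta\Gamma)=g(\delta\Gamma)$ eventually, so $q(g_n)(\delta\Lambda)=q(g)(\delta\Lambda)$ eventually, yielding pointwise convergence of $q(g_n)$ to $q(g)$ in $\text{Symm}(\Delta/\Lambda)$.

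For the image, continuity plus density of $\Delta$ in $L$ give $q(L)\subseteq\overline{q(\Delta)}=\overline{\Delta}=H$, and $q(L)\supseteq q(\Delta)=\Delta$, a dense subgroup. To upgrade density to surjectivity, I would use the factorizations $L=\Delta\cdot\oGL$ and $H=\Delta\cdot\oLH$ coming from density of $\Delta$ together with compact-openness of the isotropy subgroups. This reduces the task to showing that $q(\oGL)$ covers $\oLH$ up to $\Delta$-translation. One way to proceed is to lift convergent sequences in $\oLH$ back to $L$ using the compactness of $\oGL$ and a diagonal argument; alternatively, one can invoke the universal property of the Schlichting completion and argue that $q$ is open at the identity, so that $q(L)$ is an open (hence closed) subgroup of $H$ containing the dense $\Delta$, forcing $q(L)=H$. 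The main obstacle I expect is this last step, i.e., promoting dense image to a genuine surjection; everything else is the essentially mechanical functoriality of Schlichting completions applied to the inclusion $\Gamma\hookrightarrow\Lambda$.
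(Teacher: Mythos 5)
Your construction of $q$, its well-definedness via fiber-preservation, its continuity, and the equivariance of $\pi$ all run exactly parallel to the paper's argument. You are also right to hesitate at the last step: a continuous homomorphism with dense image need not be onto, and the paper's one-line derivation of surjectivity from density of $\Delta$ and $q(\Delta)\subseteq H$ is itself compressed. Of your two suggested fixes, the second is the one to pursue, and it can be pushed through as follows. Since $\oGL$ is compact, $q(\oGL)$ is a compact, hence closed, subgroup of $H$; it contains $\Gamma$, and by continuity and density of $\Gamma$ in $\oGL$ it is contained in $\oGH$, so in fact $q(\oGL)=\oGH$. Since $\oGL$ is open and $\Delta$ is dense in $L$, we have $L=\Delta\cdot\oGL$, hence $q(L)=\Delta\cdot\oGH$. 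Surjectivity therefore reduces to showing that $\oGH$ is open in $\oLH$, for then $q(L)$ is an open dense subgroup of $H$ and equals $H$. Be careful here: this openness is exactly what the paper deduces a little later (Proposition~\ref{prop:propbyN} and Standing Assumption~\ref{assume:oLH=oGH}) from Freudenthal's open mapping theorem applied to the \emph{already established} surjection $q$, so any argument you supply at this stage must be independent of the openness of $q$. In the concrete setting of the theorem, the input that makes this work is that $\Gamma$, being infinite and normal in the lattice $\Lambda$, is Zariski dense, and strong approximation then forces the closure of $\Gamma$ in the congruence topology on $\Lambda$ determined by $\Delta$ to have finite index.
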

\begin{proof} It is clear that $\pi$ is $\Delta$-equivariant, so $\Delta$ permutes the fibers of $\pi$. We claim that the same is true for $L$. Indeed, suppose $g\in L$ and choose $g_n\in\Delta$ such that $g_n\to g$ pointwise on $\Delta/\Gamma$. Let $x,y\in\Delta/\Gamma$ be points that lie in the same fiber of $\pi$. Then for any $n\geq 1$, their translates $g_n x$ and $g_n y$ also lie in the same fiber of $\pi$. On the other hand, for $n\gg1$, we have that $g x = g_n x$ and $g y = g_n y$. Hence $gx$ and $gy$ belong to the same fiber of $\pi$.

Therefore every $g\in L$ descends to a map $q(g)\in \text{Sym}(\Delta/\Lambda)$. It is obvious that if $g_n\to g$ pointwise on $\Delta/\Gamma$, then $q(g_n)\to q(g)$ on $\Delta/\Lambda$, and hence $q$ is continuous. Since $\Delta\subseteq L$ is dense and $q(\Delta)\subseteq H$, we conclude that $q:L\to H$ is a continuous surjection.\end{proof}

\begin{prop} Set $N:=\ker(q)$. Then $N\subseteq \oLL$.
\label{prop:NinGamma}\end{prop}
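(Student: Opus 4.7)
The plan is to show that every $g \in N$ lies in $\oLL$ by approximating $g$ with a sequence from $\Delta$ that must eventually fall inside $\Lambda$. Three ingredients drive the argument: first, $\Delta$ is dense in $L$; second, the map $q$ restricted to $\Delta \subseteq L$ coincides with the canonical map $\Delta \to H$; third, $\oLH$ is a compact \emph{open} subgroup of $H$ whose preimage in $\Delta$ is precisely $\Lambda$.

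Here is how I would execute this. Since $L$ is second countable and $\Delta$ is dense in $L$, choose a sequence $(\delta_n) \subseteq \Delta$ with $\delta_n \to g$ in $L$. Continuity of $q$ yields $q(\delta_n) \to q(g) = e$ in $H$. From the construction of $q$ in Proposition \ref{prop:LtoH}, each $\delta \in \Delta \subseteq L$ descends to left multiplication by $\delta$ on $\Delta / \Lambda$, which is exactly the image of $\delta$ under the canonical map $\Delta \to H$. Hence $\delta_n \to e$ in $H$ as well.

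Next, by Proposition \ref{prop:Lambda-arithm}, $\Delta$ commensurates $\Lambda$, so the general theory of Schlichting completions recalled in Section \ref{sec:schlichting-def} guarantees that $\oLH$ is a compact open subgroup of $H = \Delta \sslash \Lambda$. In particular $\oLH$ is an open neighborhood of $e$, so eventually $\delta_n \in \oLH$ in $H$. But $\oLH$ is the stabilizer of the base coset $e\Lambda \in \Delta/\Lambda$, so for $\delta \in \Delta$ its image in $H$ lies in $\oLH$ if and only if $\delta \Lambda = \Lambda$, if and only if $\delta \in \Lambda$. Therefore $\delta_n \in \Lambda$ for all sufficiently large $n$, and since $\delta_n \to g$ in $L$ we conclude $g \in \overline{\Lambda}^L = \oLL$.

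I do not anticipate a serious obstacle here. The argument is essentially just the statement that $q$ is built to intertwine the two Schlichting constructions, combined with the openness of $\oLH$ in $H$. The only point that requires a moment's thought is the identification of $\Delta \cap \oLH$ with $\Lambda$, but this follows immediately from the general fact that in a Schlichting completion $\Upsilon \sslash \Theta$, the stabilizer of $e\Theta$ in the completion equals the closure of $\Theta$.
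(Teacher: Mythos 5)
Your proof is correct, and it takes a route that is genuinely (if mildly) different from the paper's. The paper argues directly via orbits: since $\pi$ is $q$-equivariant, the $N$-orbit of the basepoint $e\Gamma$ lands inside $q^{-1}(e\Lambda)=\Lambda/\Gamma$, and because $\oGL$ is the stabilizer of $e\Gamma$, this gives $N\subseteq \Lambda\cdot\oGL\subseteq\oLL$ in one algebraic step, with no appeal to sequences, density, or openness of $\oLH$. Your argument instead approximates an arbitrary $g\in N$ by a sequence $\delta_n\in\Delta$, pushes forward through $q$, invokes openness of $\oLH$ in $H$ to force $q(\delta_n)\in\oLH$ eventually, and then uses the identification $\Delta\cap\oLH=\Lambda$ (i.e.\ the stabilizer of $e\Lambda$ in $H$ equals $\overline{\Lambda}^H$) to conclude $\delta_n\in\Lambda$ and hence $g\in\oLL$. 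The two ingredients you lean on --- openness of $\oLH$ and the identification of the stabilizer with the closure --- are both standard Schlichting-completion facts already recorded in Section~\ref{sec:schlichting-def}, and your verification of the latter is sound. The paper's version is shorter because the orbit-stabilizer phrasing encodes the same information without passing to a limit; yours makes the topology more explicit, which is a perfectly reasonable trade-off. One small stylistic note: you don't need second countability to extract a convergent sequence here --- density of $\Delta$ together with the fact that $L$ is metrizable (as a second countable locally compact group) suffices, and indeed the paper's own proof of Proposition~\ref{prop:res} uses the same sequence-from-density device, so you are on safe ground.
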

\begin{proof} Consider the $N$-orbit of the basepoint $e\Gamma\in \Delta / \Gamma$. Since $N$ is the kernel of $q$ and $\pi:\Delta/\Gamma\to\Delta/\Lambda$ is $q$-equivariant, we see that $N\cdot e\Gamma\subseteq q^{-1}(e\Lambda)=\Lambda / \Gamma$. Since $\oGL$ is the isotropy group of $e\Gamma\in \Delta / \Gamma$, we find that $N\subseteq \Lambda \oGL$ (the product on the right-hand side is taken as sets). The right-hand side is clearly inside $\oLL$. \end{proof}
Since $N$ is normal in $\overline{\Lambda}^L$, the product $N\oGL$ (as sets) is a subgroup of $\oLL$. We have:
\begin{prop} $N\oGL$ has finite index in $\oLL$.
\label{prop:propbyN}\end{prop}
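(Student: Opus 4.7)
The plan is to use the map $q:L\to H$ to identify the coset space $\oLL/N\oGL$ with the simpler $\oLH/\oGH$, which I expect will be visibly finite. The pivotal observation I would start from is that, despite being defined as a closure, $\oLL$ is actually open in $L$: the compact open subgroup $\oGL$ is contained in $\oLL$, so $\oLL$ contains an open neighborhood of the identity and must therefore be a clopen subgroup of $L$.

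Next, since $L$ and $H$ are locally compact second countable (hence Polish) groups, the continuous surjective homomorphism $q$ will be open by the open mapping theorem. Consequently both $q(\oLL)$ and $q(\oGL)$ are open subgroups of $H$. Because $q$ is the identity on $\Delta$ and $\oLH$ is closed, continuity will give $q(\oLL)\subseteq \oLH$; but $q(\oLL)$ is open, hence also closed, in $H$, and it contains the dense subgroup $\Lambda$ of $\oLH$, so in fact $q(\oLL)=\oLH$. An analogous argument with $\Gamma$ in place of $\Lambda$ should yield $q(\oGL)\subseteq \oGH$, while the reverse inclusion holds because $q(\oGL)$ is compact, hence closed in $H$, and contains $\Gamma$. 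Thus $q(\oGL)=\oGH$, and this is an open subgroup of the compact group $\oLH$, so it must have finite index.

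To finish, since $N=\ker q$ and $N\subseteq \oLL$, the full preimage $q^{-1}(\oGH)=q^{-1}(q(\oGL))$ equals $N\oGL$ and sits inside $\oLL=q^{-1}(\oLH)$. Therefore $q$ will descend to a bijection between the coset spaces $\oLL/N\oGL$ and $\oLH/\oGH$, and I will conclude $[\oLL:N\oGL]=[\oLH:\oGH]<\infty$. I expect the only genuinely content-laden step to be noticing that $\oLL$ is open in $L$, so that the open mapping theorem can be invoked on the various compact open subgroups; after that, the argument becomes a routine coset-space diagram chase.
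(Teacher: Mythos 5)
Your argument is correct and follows essentially the same route as the paper: both establish that $q$ is open (the paper via Freudenthal's theorem for $\sigma$-compact locally compact groups, you via the Polish-group open mapping theorem, which amount to the same thing here), deduce that $\oGH = q(\oGL)$ is open and hence finite index in the compact group $\oLH$, and pull this back along $q$ using $N\subseteq\oLL$ (Proposition \ref{prop:NinGamma}) to identify $q^{-1}(\oGH)=N\oGL$ and $q^{-1}(\oLH)=\oLL$. One small mischaracterization: openness of $\oLL$ in $L$ is not what licenses the open mapping theorem --- that theorem is applied to $q:L\to H$ itself, with no reference to $\oLL$ --- and the paper dispenses with your intermediate step $q(\oLL)=\oLH$ entirely; the real content is openness of $q$, openness of $\oGL$, and compactness of $\oLH$.
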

\begin{proof} We start by observing the map $q:L\to H$ is open: Indeed, since $\Delta$ is countable and $L$ is locally compact, $L$ is also $\sigma$-compact, and a classical result of Freudenthal is that any continuous surjective homomorphism between Hausdorff locally compact groups with $\sigma$-compact domain, is open \cite[Cor. 2.D.6]{cornulier-dlh-book}. Therefore $\oGH=q(\oGL)$ is an open subgroup of the compact group $\oLH$, and hence has finite index. Therefore $N\oGL=q^{-1}(\oGH)$ has finite index in $q^{-1}(\oLH)=\oLL$.\end{proof}
%\begin{proof} Consider the image $\oGH$ of $\oGL$ in $\oLL\slash N\cong \oLH$. The image is the congruence completion of $\Gamma$. By strong approximation, it has finite index in $\oLH$.\end{proof}

By replacing $\Lambda$ with its finite index subgroup $\Lambda\cap\oGH$ and $\Delta$ by the normal closure of $\Lambda$ , we can arrange that:
\begin{assume} $\oLH=\oGH$ and hence we have equality in the above proposition, i.e. $\oLL=N\oGL$.
\label{assume:oLH=oGH}\end{assume}

Note that $\Lambda / \Gamma = q^{-1}(e\Lambda)$ is $\oLL$-invariant (inside $\Delta / \Gamma$). Let $\res:\oLL\to \text{Sym}(\Lambda/\Gamma)$ denote the restriction map.
\begin{prop} $\res$ is continuous (with respect to the topology of pointwise convergence), has image $\Lambda/\Gamma$ (acting by translations) and kernel $\oGL$.
\label{prop:res}
\end{prop}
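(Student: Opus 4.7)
The plan is to verify the three assertions separately, exploiting the fact that $\Lambda/\Gamma$ is discrete (so pointwise-convergence of left translations is eventually stationary) and that $\Gamma$ is normal in $\Lambda$.

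\textbf{Continuity.} This is essentially by definition. If $g_n \to g$ in $\oLL$ with respect to pointwise convergence on $\Delta/\Gamma$, then in particular $g_n x \to gx$ for every $x \in \Lambda/\Gamma \subseteq \Delta/\Gamma$, so $\res(g_n)\to\res(g)$ pointwise on $\Lambda/\Gamma$.

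\textbf{Image.} First I would observe that $\oLL$ is by definition the closure of $\Lambda$ in $L$, so $\Lambda$ is dense in $\oLL$. Given $g\in\oLL$, choose $\lambda_n\in\Lambda$ with $\lambda_n\to g$. For any fixed $\mu\Gamma\in\Lambda/\Gamma$ the sequence $\lambda_n\mu\Gamma\in\Lambda/\Gamma$ converges in the discrete set $\Delta/\Gamma$, hence is eventually constant, so $\res(g)(\mu\Gamma)=\lambda\mu\Gamma$ for some $\lambda\in\Lambda$ (a priori depending on $\mu$). Taking $\mu=e$ produces a single $\lambda$; applying the argument with general $\mu$ and using $\lambda_n\mu\Gamma=(\lambda_n\mu\lambda_n^{-1})\lambda_n\Gamma$ together with normality of $\Gamma$ in $\Lambda$ shows the element acts on $\Lambda/\Gamma$ exactly as left translation by this $\lambda\Gamma$. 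Conversely, every $\lambda\in\Lambda\subseteq\oLL$ obviously restricts to left translation by $\lambda\Gamma$, and since $\Gamma$ is normal in $\Lambda$ the map $\Lambda\to\text{Sym}(\Lambda/\Gamma)$ has kernel exactly $\Gamma$. Thus $\im(\res)=\Lambda/\Gamma$ acting by translations.

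\textbf{Kernel.} Suppose $g\in\ker(\res)$. In particular $g\cdot e\Gamma=e\Gamma$, so $g$ lies in the stabilizer of the basepoint $e\Gamma\in\Delta/\Gamma$; by construction of the Schlichting completion this stabilizer is precisely $\oGL$. Conversely, any $g\in\oGL$ is a limit of a net $\gamma_n\in\Gamma$; for every $\lambda\in\Lambda$,
\[
\gamma_n\lambda\Gamma \;=\; \lambda\bigl(\lambda^{-1}\gamma_n\lambda\bigr)\Gamma \;=\; \lambda\Gamma
\]
because $\lambda^{-1}\gamma_n\lambda\in\Gamma$ by normality of $\Gamma$ in $\Lambda$. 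Passing to the limit gives $g\cdot\lambda\Gamma=\lambda\Gamma$, so $g\in\ker(\res)$. Hence $\ker(\res)=\oGL$.

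The only mildly subtle point is the identification of the image: one must combine density of $\Lambda$ in $\oLL$ with discreteness of $\Lambda/\Gamma$ and use normality to see that the "translation parameter" is uniform in $\mu$. Everything else is formal from the definition of the Schlichting completion.
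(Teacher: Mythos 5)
Your proof is correct and follows essentially the same route as the paper: continuity is by definition, the image and kernel are computed using density of $\Lambda$ in $\oLL$, discreteness of $\Lambda/\Gamma$, and normality of $\Gamma$ in $\Lambda$. One small note on the image step: the displayed identity $\lambda_n\mu\Gamma=(\lambda_n\mu\lambda_n^{-1})\lambda_n\Gamma$ is true but does not by itself give the uniformity of the translation parameter; the clean way is to use the eventual constancy $\lambda_n\Gamma=\lambda\Gamma$ to write $\lambda_n=\lambda\gamma_n$ with $\gamma_n\in\Gamma$, and then $\lambda_n\mu\Gamma=\lambda\gamma_n\mu\Gamma=\lambda\mu(\mu^{-1}\gamma_n\mu)\Gamma=\lambda\mu\Gamma$ by normality. (The paper sidesteps this computation entirely by observing that the group of left translations $\Lambda/\Gamma$ is already closed in $\mathrm{Sym}(\Lambda/\Gamma)$, so the image of a continuous map from $\oLL$ with dense image $\res(\Lambda)$ is exactly $\Lambda/\Gamma$.)
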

\begin{proof} Clearly if a sequence of maps converge pointwise on $\Delta/\Gamma$, they do so on the subset $\Lambda/\Gamma$. Therefore $\res$ is continuous. Since $\Lambda$ is dense in $\oLL$ and $\res(\Lambda)=\Lambda/\Gamma$ is closed in Sym($\Lambda/\Gamma)$, the image of $\res$ is exactly $\Lambda/\Gamma$. It remains to determine $\ker(\res)$: Clearly $\oGL\subseteq \ker(\res)$. Conversely, suppose $g\in\oLL$ satisfies $\res(g)=e$. Write $g=\lim_n \lambda_n$ for some $\lambda_n\in\Lambda$. Then $\lambda_n$ fixes the basepoint $e\Gamma\in\Delta/\Gamma$ for $n\gg1$, and hence $\lambda_n\in\Gamma$ for $n\gg1$.\end{proof}
Since $\Gamma$ is normal in $\Lambda$, we also have that $\oGL$ is normal in $\oLL$. In particular, $\oGL\cap N$ is a compact open normal subgroup of $N$.

\begin{claim} $\res$ restricts to an isomorphism $N/(\oGL\cap N)\overset{\cong}{\longrightarrow} \Lambda/\Gamma$.
\label{claim:res-res-iso} \end{claim}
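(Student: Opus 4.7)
The plan is to combine the decomposition $\oLL = N\cdot\oGL$ (secured by Standing Assumption \ref{assume:oLH=oGH}) with Proposition \ref{prop:res} via a routine application of the first isomorphism theorem; the claim should fall out without further topological input.

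First I would identify the kernel. Since $\res:\oLL\to\mathrm{Sym}(\Lambda/\Gamma)$ is a group homomorphism and $N$ sits inside $\oLL$ by Proposition \ref{prop:NinGamma}, the restriction $\res|_N$ is a homomorphism with kernel
\[
\ker(\res|_N) \;=\; N\cap \ker(\res) \;=\; N\cap \oGL,
\]
where the second equality uses Proposition \ref{prop:res}. Since $\oGL$ is normal in $\oLL$, the intersection $\oGL\cap N$ is normal in $N$, so the quotient $N/(\oGL\cap N)$ makes sense.

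Next I would check surjectivity onto $\Lambda/\Gamma$. By Proposition \ref{prop:res} the image of $\res$ on all of $\oLL$ is exactly $\Lambda/\Gamma$. Using $\oLL = N\oGL$ from Standing Assumption \ref{assume:oLH=oGH} and the fact that $\oGL\subseteq\ker(\res)$, one gets
\[
\Lambda/\Gamma \;=\; \res(\oLL) \;=\; \res(N\oGL) \;=\; \res(N)\cdot\res(\oGL) \;=\; \res(N).
\]
The first isomorphism theorem then produces the desired isomorphism $N/(\oGL\cap N)\xrightarrow{\cong}\Lambda/\Gamma$.

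There is no real obstacle here; the content of the claim has been front-loaded into Propositions \ref{prop:propbyN} and \ref{prop:res} together with the normalization in Standing Assumption \ref{assume:oLH=oGH}. The only mild subtlety worth a sentence in the written proof is noting that $\res|_N$ is genuinely a group homomorphism (it is simply the restriction of the $\oLL$-homomorphism $\res$ to a subgroup), and that $\Lambda/\Gamma$ carries a well-defined group structure because $\Gamma\lhd\Lambda$, so the image under $\res$ really coincides with this quotient as a group and not merely as a set.
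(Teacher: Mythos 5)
Your proof is correct and uses essentially the same ingredients as the paper (Proposition \ref{prop:res} for the kernel and image of $\res$, Standing Assumption \ref{assume:oLH=oGH} for $\oLL=N\oGL$, and the isomorphism theorems). The only cosmetic difference is that you apply the first isomorphism theorem directly to $\res|_N$, whereas the paper first passes to $\oLL/\oGL\cong\Lambda/\Gamma$ and then invokes the second isomorphism theorem for $(N\oGL)/\oGL\cong N/(\oGL\cap N)$; these are the same argument.
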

\begin{proof} Recall from Proposition \ref{prop:res} that
	$$\res:\oLL\to \Lambda/\Gamma$$
is surjective and $\ker(\res)=\oGL$. So $\res$ descends to an isomorphism
	$$\oLL/\oGL\overset{\cong}{\to} \Lambda/\Gamma.$$
On the other hand, by the modification of $\Lambda$ in Standing assumption \ref{assume:oLH=oGH}, we have $\oLL=N\oGL$, so that
	$$\oLL/\oGL = (N\oGL)/\oGL \cong N/(\oGL\cap N).$$
\end{proof}
%It remains to prove injectivity. Since $\ker(\res)=\oGL$, we see that $\ker(\res/C)=(C\oGL)/C$. We need to prove its intersection with $N/C$ is trivial. Since $C\subseteq N$, it suffices to prove that $C\oGL\cap N\subseteq C$, or equivalently $\oGL\cap N\subseteq C$. But this is immediate as $C$ is the normal closure (in $L$) of $\oGL\cap N$ and hence certainly contains $\oGL\cap N$.\end{proof}

\subsection{Down towards a central extension}\label{sec:down}
Write $C:=\ll \hspace{-0.1 cm} \oGL\cap N\hspace{-0.1 cm}\gg_L$ for the normal closure in $L$ of $\oGL\cap N$. Note that $C$ is a normal subgroup of $L$ contained in $N$, and is open in $N$ (because $\oGL\cap N$ is open in $N$), hence also closed.

Now we divide by $C$ and study the extension
	$$1\to N/C \to L/C \to H\to 1.$$
Restriction of this short exact sequence to $\oLL/C$ (as middle term) gives
		\begin{equation}
		1\to N/C \to \oLL/C\to \overline{\Lambda}^{H}\to 1.
		\label{eq:seq-res}
			\end{equation}
\begin{claim} The short exact sequence \eqref{eq:seq-res} splits trivially as an extension of topological groups.
\label{claim:l-split}\end{claim}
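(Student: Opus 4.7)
The plan is to construct an explicit continuous splitting of \eqref{eq:seq-res} whose image centralizes $N/C$, yielding a topological direct product decomposition.

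The first step is to observe that $\overline{\Gamma}^L$ is normal in $\overline{\Lambda}^L$. Indeed, since $\overline{\Gamma}^L$ is compact open in $L$, its normalizer $N_L(\overline{\Gamma}^L)$ is open (it contains a full $\overline{\Gamma}^L$-coset around each of its elements) and therefore closed; since it contains $\Lambda$, which is dense in $\overline{\Lambda}^L$, it contains $\overline{\Lambda}^L$. Consequently, for any $h \in \overline{\Gamma}^L$ and $n \in N \subseteq \overline{\Lambda}^L$, the commutator $[h,n]$ lies simultaneously in $\overline{\Gamma}^L$ (because $\overline{\Gamma}^L$ is normalized by $n$) and in $N$ (because $N$ is normal in $L$). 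Hence $[h,n] \in \overline{\Gamma}^L \cap N \subseteq C$, so the images of $\overline{\Gamma}^L$ and $N$ in $\overline{\Lambda}^L/C$ commute.

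Next I would set up the candidate splitting. Since $C \subseteq N$ (as $C$ is the $L$-normal closure of a subgroup of $N$ and $N \triangleleft L$), we have $\overline{\Gamma}^L \cap C \subseteq \overline{\Gamma}^L \cap N$; the reverse inclusion follows from $\overline{\Gamma}^L \cap N \subseteq C$. Thus the canonical map $\overline{\Gamma}^L \to \overline{\Lambda}^L/C$ has kernel exactly $\overline{\Gamma}^L \cap N$, and Standing Assumption~\ref{assume:oLH=oGH} together with Proposition~\ref{prop:LtoH} show that $q$ restricts on $\overline{\Gamma}^L$ to a continuous surjection onto $\overline{\Gamma}^H = \overline{\Lambda}^H$ with kernel $\overline{\Gamma}^L \cap N$. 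This produces a continuous bijection
\[
\overline{\Gamma}^L/(\overline{\Gamma}^L \cap N) \longrightarrow \overline{\Lambda}^H,
\]
which is a topological isomorphism by the open mapping theorem (both sides are locally compact, Hausdorff, and second countable, the domain being $\sigma$-compact as a quotient of $L$). Inverting this and composing with the embedding $\overline{\Gamma}^L/(\overline{\Gamma}^L \cap N) \hookrightarrow \overline{\Lambda}^L/C$ furnishes a continuous section $\sigma: \overline{\Lambda}^H \to \overline{\Lambda}^L/C$ of the projection in \eqref{eq:seq-res}.

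Finally, I would combine the two inputs: the image $\sigma(\overline{\Lambda}^H)$ centralizes $N/C$ by the commutator computation above, and together with $N/C$ it generates $\overline{\Lambda}^L/C$ (since $\overline{\Lambda}^L = N \cdot \overline{\Gamma}^L$) with trivial intersection (since $\overline{\Gamma}^L \cap N \subseteq C$). Hence multiplication induces a continuous bijective homomorphism $(N/C) \times \overline{\Lambda}^H \to \overline{\Lambda}^L/C$, which is a topological isomorphism by one more invocation of the open mapping theorem. The main subtle point is verifying that $\overline{\Gamma}^L$ is genuinely normal in $\overline{\Lambda}^L$ so that the commutator lands in $C$; everything else is essentially bookkeeping combined with standard facts about LCSC groups.
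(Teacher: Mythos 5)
Your proof is correct, and it reaches the same direct product decomposition $\oLL/C \cong (N/C)\times (\oGL C/C)$ as the paper, but by a genuinely different route. The paper constructs a \emph{left} splitting: using the restriction map $\res:\oLL\to \Lambda/\Gamma$ from Proposition~\ref{prop:res} and Claim~\ref{claim:res-res-iso}, it builds a continuous retraction $(\res/C)|_{N/C}^{-1}\circ(\res/C):\oLL/C\to N/C$, whose kernel is then the desired complement. You instead construct a \emph{right} splitting $\sigma:\oLH\to\oLL/C$ lifted through $\oGL$, and then upgrade it to a direct product by the commutator computation $[\oGL, N]\subseteq \oGL\cap N\subseteq C$. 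The commutator step is the crucial extra work your approach requires (a right splitting alone only gives a semidirect product), and it is correct: it uses normality of $\oGL$ in $\oLL$ (which the paper has already recorded just after Proposition~\ref{prop:res}, so you did not need to rederive it) together with normality of $N$ in $L$. Two minor observations: the appeal to the open mapping theorem is overkill in two places, since $\oGL/(\oGL\cap N)$ and $\oLH$ are both compact Hausdorff, so any continuous bijection between them is automatically a homeomorphism; and the paper's left-splitting formulation is slightly more economical precisely because it avoids the commutator step, letting group theory supply the complement for free. Still, your argument is self-contained modulo what the paper has already established, and it has the pedagogical advantage of making the role of the identity $\oGL\cap N\subseteq C$ completely explicit.
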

\begin{proof} We provide a left splitting. First note that by the above claim, $\res$ descends to a map
		$$\res/C : \oLL/C\to (\Lambda/\Gamma)/\res(C)$$
that restricts to an isomorphism $N/C\to (\Lambda/\Gamma)/\res(C)$. Then the desired left splitting is simply the composition of $\res/C$ with the inverse of its restriction to $N/C$:
	$$(\res/C)|_{N/C}^{-1} \circ (\res/C): \oLL/C\longrightarrow   N/C.$$
It is clear this map restricts to identity on $N/C$. This is a splitting of topological groups because both $\res/C$ and its inverse are continuous.\end{proof}
Hence we have a closed subgroup $K\subseteq \oLL/C$ (namely the kernel of the above left splitting) such that
	$$\oLL/C\cong (N/C) \times K$$
(and this is compatible with the sequence, i.e. the copy of $N/C$ is the one given by the inclusion in the sequence, and $K$ projects isomorphically to $\overline{\Lambda}^{H}$.)

We note for use below:
\begin{claim} $\Gamma\subseteq K$. \label{claim:Gamma-subset-K}\end{claim}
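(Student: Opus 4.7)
The plan is to use the explicit description of $K$ as the kernel of the left splitting constructed in Claim \ref{claim:l-split}. Recall that the splitting is
$$r := (\res/C)|_{N/C}^{-1} \circ (\res/C) : \oLL/C \longrightarrow N/C,$$
so $K = \ker(r)$ inside $\oLL/C$. Hence the assertion ``$\Gamma\subseteq K$'' should be interpreted (as with other statements in this subsection) modulo $C$: I will show that for every $\gamma\in\Gamma$, the coset $\gamma C\in\oLL/C$ lies in $\ker(r)$.

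First, I would note that $\Gamma\subseteq\oGL\subseteq\oLL$, so it makes sense to view $\Gamma$ inside $\oLL/C$. By Proposition \ref{prop:res}, $\ker(\res)=\oGL$, and in particular $\res(\gamma)=e$ in $\Lambda/\Gamma$ for every $\gamma\in\Gamma$. Passing to the quotient by $C$, we therefore get $(\res/C)(\gamma C)=e$ in $(\Lambda/\Gamma)/\res(C)$.

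Applying the inverse $(\res/C)|_{N/C}^{-1}$, which is well-defined since by Claim \ref{claim:res-res-iso} the restriction of $\res/C$ to $N/C$ is an isomorphism onto $(\Lambda/\Gamma)/\res(C)$, we conclude
$$r(\gamma C) = (\res/C)|_{N/C}^{-1}\bigl((\res/C)(\gamma C)\bigr) = e$$
in $N/C$. Hence $\gamma C \in \ker(r) = K$, proving the claim.

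There is no real obstacle here; the content of the claim is simply that $\Gamma$ lies in the kernel of the restriction map $\res$, so after modding out by $C$ and projecting via the constructed retraction onto $N/C$, every element of $\Gamma$ becomes trivial. The only thing to double-check is that the left splitting is genuinely the map written above (rather than some other splitting), which is exactly the formula given in the proof of Claim \ref{claim:l-split}.
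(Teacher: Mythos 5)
Your proof is correct and follows essentially the same approach as the paper: identify $K$ as the kernel of the retraction $r=(\res/C)|_{N/C}^{-1}\circ(\res/C)$, observe that $\ker(r)=\ker(\res/C)$, and use that $\Gamma\subseteq\oGL=\ker(\res)$ so the image of $\Gamma$ in $\oLL/C$ dies under $\res/C$. The paper phrases this more compactly by writing $\ker(r)=\ker(\res/C)=(C\oGL)/C$, but the reasoning is identical.
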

\begin{proof} $K$ is precisely the kernel of the left-splitting produced in the proof of Claim \ref{claim:l-split}, i.e. the kernel of
	$$(\res/C)|_{N/C}^{-1} \circ (\res/C): \oLL/C\longrightarrow   N/C.$$
The kernel of this map coincides with $\ker(\res/C)=(C\oGL)/C$ and therefore contains (the image of) $\Gamma$.\end{proof}
Now consider the normal closure $\ll \hspace{-0.1 cm} K\gg_{L/C}$.
\begin{claim} Under the map $L/C\to H$, the image of $\ll \hspace{-0.1 cm} K\gg_{L/C}$ has finite index.\end{claim}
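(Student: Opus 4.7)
The plan is to push $K$ forward through $q$ and then invoke topological simplicity of $p$-adic groups. As observed right after Claim \ref{claim:l-split}, $K$ projects isomorphically onto $\overline{\Lambda}^H = \oGH$ under $q$. Since $q : L/C \to H$ is a surjective homomorphism, it carries normal closures to normal closures:
\[
q(\ll K \gg_{L/C}) \;=\; \ll q(K) \gg_H \;=\; \ll \oGH \gg_H.
\]
So the claim reduces to showing that $\ll \oGH \gg_H$ has finite index in $H$.

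For this, I would invoke the $S$-arithmeticity of $\Delta$ secured in Remark \ref{rem:one-prime}. Generalizing Example 2.2, $H = \Delta \sslash \Lambda$ has finite index in a product $\prod_{v \in S_f} \bbG(k_v)$ of almost simple $p$-adic groups, one per non-archimedean place of $S$, each of which is topologically simple modulo a finite center. Since $\oGH$ is open in $H$, it contains an open box $\prod_v U_v$ of open subgroups $U_v \leq \bbG(k_v)$; the normal closure of such a box in the ambient product decomposes factorwise as $\prod_v \ll U_v \gg_{\bbG(k_v)}$. Topological simplicity modulo center forces each $\ll U_v \gg_{\bbG(k_v)}$ to be of finite index in $\bbG(k_v)$ (it is open, hence closed, and its image in the topologically simple quotient $\bbG(k_v)/Z$ is a nontrivial closed normal subgroup, hence all of $\bbG(k_v)/Z$). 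Assembling these across factors and intersecting with the finite-index subgroup $H \leq \prod_v \bbG(k_v)$ yields that $\ll \oGH \gg_H$ has finite index in $H$.

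The main technical obstacle is the structural identification of $H$ as a finite-index subgroup of $\prod_{v \in S_f} \bbG(k_v)$, which requires extending Example 2.2 from $\SL_n$ to our general $\bbG$ in the $S$-arithmetic setting; this should follow by embedding $\Delta$ diagonally in $\prod_{v \in S} \bbG(k_v)$ and noting that the closure of $\Lambda$ captures precisely the maximal compact subgroups at the non-archimedean places of $S$.
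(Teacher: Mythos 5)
Your approach is correct in spirit but diverges from the paper's, and it has one imprecision worth flagging. The paper's proof never needs to identify $H$ structurally: it applies the Margulis Normal Subgroup Theorem to the $S$-arithmetic lattice $\Delta$ directly, concluding that $\ll\Lambda\gg_\Delta$ has finite index in $\Delta$, and then uses that $\Delta$ is dense in $H$ together with the fact that $\ll\oLH\gg_H$ is open (hence closed) to push the finite-index conclusion into $H$. You instead identify $H$ (up to finite index) with a product $\prod_{v\in S_f}\bbG(k_v)$ and invoke topological simplicity of the factors modulo center. This is a genuinely different route: it trades Margulis' NST for the structure theory of $p$-adic groups (Kneser--Tits, Borel--Tits, Prasad--Tits), and it requires you to actually prove the structural identification of $H$ that you correctly flag as a technical obstacle (the paper only asserts it in Example 2.2, and its own proof sidesteps needing it).

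Two caveats about your argument. First, the precise statement ``$\bbG(k_v)$ is topologically simple modulo a finite center'' is not correct in general: what is simple modulo center is $\bbG(k_v)^+$, the subgroup generated by unipotents (equivalently, the image of the simply connected cover). In characteristic zero this has finite index in $\bbG(k_v)$ because $H^1(k_v,Z(\widetilde\bbG))$ is finite, so your conclusion (any open normal subgroup has finite index) is still true, but you should route the argument through $\bbG(k_v)^+$ rather than through a purported simplicity of $\bbG(k_v)/Z$. Second, and more seriously, this approach does not carry over to positive characteristic: there $\bbG(k_v)/\bbG(k_v)^+$ is compact but can fail to be finite, so an open normal subgroup of $\bbG(k_v)$ need not have finite index. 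The paper's NST-based argument applies unchanged in that setting (see the paper's Section 3, where this subsection is explicitly noted as requiring no modification), which is a concrete payoff of their choice of route.
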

\begin{proof} Under $q:L\to H$, the image of $K$ is $\oLH$ and therefore the image of $\ll \hspace{-0.1 cm} K\gg_{L/C}$ is $\ll \hspace{-0.1 cm} \oLH\gg_H$. Taking instead the normal closure $\ll \hspace{-0.1 cm}\Lambda\gg_\Delta$ in $\Delta$ and noting that by Lemma \ref{lem:gen-venky}, $\Delta$ is $S$-arithmetic and in particular satisfies the Margulis Normal Subgroups Theorem, we see that $\ll \hspace{-0.1 cm} \Lambda\gg_\Delta$ has finite index in $\Delta$. Since $\Delta$ is dense in $H$ and $\ll \hspace{-0.1 cm} \oLH\gg_H\subseteq H$ is open (and hence closed), it follows that $\ll \hspace{-0.1 cm} \oLH\gg_H$ has finite index in $H$.\end{proof}
Write $H'$ for the image of $\ll \hspace{-0.1 cm} K\gg_{L/C}$ in $H$, and let $L'$ be the preimage of $H'$ in $L/C$, so that we can write
	\begin{equation}
	1\to N/C\to L'\to H'\to 1.
	\label{eq:ses-mod-c}
	\end{equation}
Since $K$ commutes with $N/C$, so does its normal closure. In particular $\ll \hspace{-0.1 cm} K\gg_{L/C}$ intersects $N/C$ in a central subgroup $Z$ that is normal in $L/C$. Now we quotient by $Z$ and find the sequence
	\begin{equation}
	1\to (N/C)/Z \to L'/Z \to H'\to 1
	\label{eq:ses-mod-cz}
	\end{equation}
	
which splits trivially as
	$$L'/Z\cong (N/C)/Z \times (\ll \hspace{-0.1 cm} K\gg_{L/C} / Z).$$
\begin{claim} $Z$ has finite index in $N/C$.
\label{claim:z-nc-fi}\end{claim}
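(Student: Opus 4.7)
The claim is equivalent to showing that $(N/C)/Z$ is finite. From the direct product decomposition $L'/Z\cong (N/C)/Z \times (\ll K\gg_{L/C}/Z)$ established above, we have
$$(N/C)/Z \;\cong\; L'/\ll K\gg_{L/C},$$
so my plan is to exhibit this as a finite quotient of $\Delta$ by applying Margulis' Normal Subgroups Theorem to $\Delta$ itself.

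First I would record a topological observation: $\ll K\gg_{L/C}$ is open in $L/C$. Indeed, $\oLL = \Lambda\cdot\oGL$ is open in $L$, being a union of translates of the compact-open subgroup $\oGL$, so $\oLL/C$ is open in $L/C$. The topological splitting $\oLL/C\cong (N/C)\times K$ from Claim~\ref{claim:l-split}, combined with the discreteness of $N/C$ (which holds because the open subgroup $\oGL\cap N$ of $N$ is contained in $C$), shows that $K$ is open in $\oLL/C$ and hence in $L/C$. Therefore $\ll K\gg_{L/C}$ is open and the quotient $(L/C)/\ll K\gg_{L/C}$ is discrete. Since $L'/\ll K\gg_{L/C}$ has finite index in $(L/C)/\ll K\gg_{L/C}$, it suffices to show that the latter is finite.

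Next, consider the homomorphism $\Psi\colon \Delta \to (L/C)/\ll K\gg_{L/C}$ obtained from the composition $\Delta\hookrightarrow L\twoheadrightarrow L/C$ followed by the quotient. Density of $\Delta$ in $L/C$ together with discreteness of the target make $\Psi$ surjective. By Claim~\ref{claim:Gamma-subset-K}, the image of $\Gamma$ in $L/C$ lies in $K\subseteq \ll K\gg_{L/C}$, so $\Gamma\subseteq \ker(\Psi)$; since $\Gamma$ is infinite by hypothesis, so is $\ker(\Psi)$.

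Finally, by Remark~\ref{rem:one-prime} we may assume $\Delta$ is $S$-arithmetic with $|S|\geq 2$: density of $\Delta$ in the rank-one group $G$ precludes $|S|=1$, since an $S$-arithmetic lattice with a single place embeds discretely in $G$. Consequently $\Delta$ has higher $S$-rank, and Margulis' Normal Subgroups Theorem forces the infinite normal subgroup $\ker(\Psi)$ to have finite index in $\Delta$. Hence $\Psi(\Delta) = (L/C)/\ll K\gg_{L/C}$ is finite, and so is its finite-index subgroup $(N/C)/Z$. The principal step requiring care is the appeal to Margulis' theorem, which depends on $\Delta$ having $S$-rank at least $2$; this follows from the density hypothesis, but should be stated explicitly.
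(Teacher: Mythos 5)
Your proof is correct and follows essentially the same route as the paper: both use the product splitting of $L'/Z$, the inclusion $\Gamma\subseteq K$ from Claim~\ref{claim:Gamma-subset-K} to produce an infinite kernel, density of $\Delta$ plus discreteness of the target for surjectivity, and Margulis' Normal Subgroups Theorem for the $S$-arithmetic $\Delta$. The only cosmetic difference is that you package the quotient as $(L/C)/\ll K\gg_{L/C}$ and map $\Delta$ there directly, whereas the paper projects $\Delta'\to L'/Z$ onto the first factor; your explicit remark on why $|S|\geq 2$ is a sensible clarification of a point the paper leaves implicit.
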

\begin{proof} We have a natural map $\Delta\to (L/C)/Z$. Let $\Delta'\subseteq \Delta$ consist of those elements whose projection to $L/C$ lies in $L'$. So $\Delta'$ is a finite index subgroup of $\Delta$. Composition of $\Delta'\to L'\to L'/Z$ with the projection from
	$$L'/Z\cong (N/C)/Z \times (\ll \hspace{-0.1 cm} K\gg_L / Z)$$
to its first factor restricts is a map $\rho:\Delta'\to (N/C)/Z$. Then $\rho$ is surjective: Indeed, the image of $\Delta$ in $L'$ is dense and $N/C$ is discrete. Furthermore, $\rho$ has infinite kernel: Indeed $\ker(\rho)=\Delta\cap (\ll \hspace{-0.1 cm} K\gg_L/Z)$. Since $\Gamma\subseteq K$ by Claim \ref{claim:Gamma-subset-K}, $\rho$ is trivial on $\Gamma$.

Since $\Delta'$ is $S$-arithmetic and $\rho$ has infinite kernel, by Margulis' normal subgroups theorem, its image is finite. Since the image of $\rho$ is isomorphic to $(N/C)/Z$, we conclude that $Z$ has finite index in $N/C$.\end{proof}

\subsection{Splitting of the central extension} \label{sec:abquot}
The goal of this section is to prove:
\begin{lemma} \label{lemma:c-fi} Assume the notation of the Main Theorem \ref{thm:main}, and define $C$ as above. Then $C\subseteq N$ has finite index.\end{lemma}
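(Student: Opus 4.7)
The plan is to reduce to showing $Z$ is finite, set up a central extension, produce an algebraic section over a dense $S$-arithmetic subgroup, and then invoke cohomological rigidity of $S$-arithmetic groups. Combined with Claim \ref{claim:z-nc-fi}, which asserts that $Z$ has finite index in $N/C$, finiteness of $Z$ will give the lemma.

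First, since $Z \subseteq \ll K\gg_{L/C}$ and the direct product decomposition $L'/Z \cong (N/C)/Z \times \ll K\gg_{L/C}/Z$ identifies $\ll K\gg_{L/C}/Z$ with $H'$, and since $Z$ is central in $L/C$, I obtain a continuous central extension of topological groups
\begin{equation*}
1 \to Z \to \ll K\gg_{L/C} \to H' \to 1, \qquad (\star)
\end{equation*}
with continuous cohomology class $[c] \in H^2_c(H', Z)$. Next, set $\Delta'' := \ker(\rho)$, where $\rho : \Delta' \to (N/C)/Z$ is the map from the proof of Claim \ref{claim:z-nc-fi}. Because $(N/C)/Z$ is finite, $\Delta''$ has finite index in $\Delta$ and is therefore still $S$-arithmetic. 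By construction, the image of $\Delta''$ in $L'/Z$ has trivial $(N/C)/Z$-component, so it lies in $\ll K\gg_{L/C}/Z$, and hence the image of $\Delta''$ in $L/C$ itself lies in $\ll K\gg_{L/C}$ (using $Z \subseteq \ll K\gg_{L/C}$). This produces a group-theoretic section $\sigma : \Delta'' \to \ll K\gg_{L/C}$ of $(\star)$ over the image of $\Delta''$ in $H'$, which is dense in an open finite-index subgroup; consequently the restriction of $[c]$ to $H^2(\Delta'', Z)$ vanishes.

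To conclude, I would argue that the restriction map $H^2_c(H', Z) \to H^2(\Delta'', Z)$ has torsion kernel, so $[c]$ itself is torsion. This is a cohomological rigidity statement for central extensions of the $S$-arithmetic completion $H'$ by its $S$-arithmetic lattice $\Delta''$, and should follow from Margulis superrigidity applied to the $S$-arithmetic structure on $\Delta$ arranged in Remark \ref{rem:one-prime} (compare the Raghunathan--Deligne analysis of central extensions of $S$-arithmetic groups). Once $[c]$ is torsion, the discreteness of $Z$ forces $Z$ to be finite, since a torsion continuous central extension of a semisimple $S$-arithmetic group by a discrete abelian group has finite central kernel.

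The main obstacle is this cohomology comparison step: $G$ is rank one, so one might worry that the semisimple $H'$ is too small to support the needed rigidity. Fortunately, $H$ is essentially $\bbG(k_{S \ssm \{v_0\}})$, and when $|S| \geq 2$ this completion has higher $S$-rank, which is precisely what enables the rigidity input. Handling degenerate cases where $H'$ has only rank-one factors may require a more delicate argument using bounded generation by unipotents, in the spirit of Venkataramana's and Shalom--Willis's work.
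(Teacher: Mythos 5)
Your overall strategy matches the paper's: reduce to showing $Z$ is finite (via Claim~\ref{claim:z-nc-fi}), package the data as a central extension, and show the corresponding $H^2$-class is torsion. But there are three substantive gaps in the route you take from there.

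First, to say the extension $(\star)$ has a class in \emph{continuous} cohomology $H^2_c(H',Z)$ you need to know the extension admits a continuous (or at least Borel) section, i.e.\ that $\ll K\gg_{L/C}$ is homeomorphic to $Z\times H'$ as a topological space. This is not automatic: the paper devotes Claim~\ref{claim:top-split} to it, leveraging the topological splitting of Claim~\ref{claim:l-split} over the compact open subgroup $\oLL/C$ and then tiling $L'$ by its cosets. Your write-up skips this entirely.

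Second, your key step asserts that the restriction map $H^2_c(H',Z)\to H^2(\Delta'',Z)$ has torsion kernel, and that this ``should follow from Margulis superrigidity.'' This is not a standard consequence of superrigidity, and you rightly worry that it may fail when $H'$ has only rank-one factors (which happens exactly when $|S|=2$, so it is not a degenerate side case but the basic one). The paper avoids this issue entirely by computing $H^2_c(H'',\mathbb{Z})$ directly: writing $Z\cong\mathbb{Z}^r\oplus T$, it invokes the Casselman--Wigner vanishing $H^2_c(H',\mathbb{C})=0$ for semisimple $p$-adic groups (which holds in any rank), uses Michael's long exact sequence for $0\to\mathbb{Z}\to\mathbb{C}\to\mathbb{C}/\mathbb{Z}\to 0$ over totally disconnected groups, and observes that $H^1_c(H'',\mathbb{C}/\mathbb{Z})$ is torsion since the topological abelianization is finite. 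That computation requires no comparison with lattice cohomology and no rank hypothesis; a rigidity-of-restriction argument would need its own proof and still would not cover the rank-one case you flag.

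Third, the concluding step---``a torsion continuous central extension of a semisimple $S$-arithmetic group by a discrete abelian group has finite central kernel''---is false as stated: the split extension $\mathbb{Z}\times H'$ is a torsion (indeed trivial) class with infinite kernel. The actual argument in the paper is that $m\omega=0$ yields a continuous homomorphism $L'\to Z$ restricting to multiplication by $m$ on $Z$; composing with the dense inclusion of $\Delta'$ gives a homomorphism from an $S$-arithmetic group onto a subgroup of $Z$ containing $mZ$, which by Margulis' Normal Subgroups Theorem must have finite image, forcing $Z$ to be finite. Your proposal omits the appeal to NST here, which is indispensable.
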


\begin{proof} Since $Z\subseteq N/C$ is finite index, it suffices to prove that $Z$ is finite. Recall that $L'$ is a suitable finite index subgroup of $L$.  Consider the short exact sequence
	\begin{equation}
	1\to Z\to L'\to L'/Z\to 1.
	\label{eq:ses-z}
	\end{equation}
Write $H'':=L'/Z$ and recall that $H''\cong (N/C)/Z\times H'$. The above short exact sequence is a central extension, and hence is determined by a cohomology class in $H^2(H'',Z)$. We claim this cohomology class is actually contained in the continuous cohomology $H^2_c(H'',Z)$. Here $L'$ is equipped with the topology as a subgroup of the Schlichting completion $L=\Delta\sslash\Gamma$, and $Z$ is equipped with the subspace topology (and hence is discrete) of $L'$ and $H''$ is equipped with the quotient topology. To establish continuity, recall that a central extension corresponds to a continuous cohomology class if the extension is a product as topological spaces (Hu \cite[5.3]{Hu-cohomology}), i.e. if $L'$ is homeomorphic to $Z \times H''$.
\begin{claim} $L'$ is homeomorphic to $Z\times H''$. \label{claim:top-split} \end{claim}
\begin{proof} Since $H''\cong (N/C)/Z\times H'$, and $N/C$ is a discrete group and hence homeomorphic to $Z\times (N/C)/Z$, it suffices to show that $L'$ is homeomorphic to $(N/C)\times H'$.

By Claim \ref{claim:l-split} the short exact sequence
	$$1\to N/C\to L'\to H'\to 1$$
splits as an extension of topological groups when restricted to the compact open subgroup $\overline{\Lambda}^L/C$ of $L'$, i.e.
	$$1\to N/C\to \overline{\Lambda}^L/C\to \overline{\Lambda}^H\to 1$$
is split and this splitting is topological. Since $\overline{\Lambda}^L/C$ is open in $L'$, we can choose a (countable, discrete) set $\{l_i\}_{i\in I}$ of its coset representatives, and write
	$$L'=\bigsqcup_{i\in I} l_i \, \overline{\Lambda}^L/C.$$
Using $\overline{\Lambda}^L/C\cong (N/C)\times \overline{\Lambda}^H$ and that $N/C$ is normal in $L'$, we have a homeomorphism
	\begin{align*}
	L'	&\cong \bigsqcup_{i\in I} l_i ((N/C)\times \overline{\Lambda}^H) 	\\
		&= N/C\times \bigsqcup_i l_i \overline{\Lambda}^H				\\
		&=N/C \times H',
			\end{align*}
as desired.\end{proof}

%\begin{dfn} Let $E$ be a locally compact group and let $A$ be an abelian topological group. Consider the cochain complex $C^{\bullet}(E,A)$ of continuous cochains on $E$ valued in $A$, equipped with the usual differential. The continuous cohomology $H^\bullet_c(E;A)$ of $E$ with coefficients in $A$ is the cohomology of this complex. \end{dfn}

This cohomological rephrasing is only useful if we can compute the relevant cohomology group $H^2_c(H'',Z)$:

\begin{claim} $H^2_c(H'',Z)$ is torsion.
\label{claim:h2-tor}
\end{claim}
\begin{proof} First note that $Z$ is finitely generated: Indeed, $Z$ has finite index in $N/C$ (Claim \ref{claim:z-nc-fi}) so it suffices to show $N/C$ is finitely generated. But $N/C$ is a quotient of $\Lambda/\Gamma$ (Claim \ref{claim:res-res-iso}) and hence is finitely generated because $\Lambda$ is. Therefore $Z$ is finitely generated and abelian, so that we can write $Z\cong \bbZ^r \oplus T$ for some finite abelian group $T$, and hence
	$$H^2_c(H'',Z)\cong H^2_c(H'',\bbZ)^r \oplus H^2_c(H'',T).$$
Since $T$ is finite, the second term is clearly torsion.
\begin{rem} For later use in the positive characteristic case (see Section \ref{sec:char+}), we note that we will not use that $T$ is finite, but merely that it has bounded exponent. \end{rem}
Hence it remains to show that $H^2_c(H'',\bbZ)$ is torsion. To do so, consider the short exact sequence of coefficients
	\begin{equation}
		0\to \bbZ\to \bbC\to \bbC/\bbZ\to 0.
	\label{eq:ses-coeff}
	\end{equation}	
Since this sequence is not split (as topological groups), this does not automatically yield a long exact sequence on continuous cohomology of (locally compact, second countable) topological groups. However, for totally disconnected groups, it is a result of Michael that there is a long exact sequence (see \cite[Thm M]{wigner-cohomology}). The part that is relevant for us is
	\begin{equation}
		\dots \to H^1_c(H'',\bbC/\bbZ)\to H^2_c(H'',\bbZ)\to H^2_c(H'',\bbC) \dots
		\label{eq:les-coeff}
	\end{equation}	
To describe $H^2_c(H'',\bbC)$, recall the following vanishing result for the continuous cohomology of semisimple groups:
\begin{thm}[{Casselman-Wigner \cite[Cor. 2]{casselman-wigner}}] Let $F$ be a non-archimedean local field. The $F$-rational points of a connected, semisimple, algebraic group over $F$ have vanishing continuous cohomology (with $\bbC$-coefficients) in positive degrees.\end{thm}
\begin{rem} The above citation is for the result with $F=\bbQ_p$. For the general case, see \cite[Rem. (2), p. 210]{casselman-wigner}.
\label{rem:any-field}\end{rem}
We will use this to show that $H^2_c(H'',\bbC)=0$:  Indeed, we have $H''\cong (N/C)/Z\times H'$ and $H'\subseteq H$ has finite index. The cohomology of $H''$ is then computed from the cohomology of $H'$ using the Hochschild-Serre spectral sequence (see \cite[Prop. 5]{casselman-wigner}), whose second page is $E_2^{pq}=H^p_c(H''/H', H^q_c(H',\bbC))$. Since $H''/H'\cong (N/C)/Z$ is finite, its cohomology vanishes in positive degrees so that the the second page is concentrated on $p=0$, where its values are $H^\bullet_c(H',\bbC)$. Further, $H'\subseteq H$ is a finite index normal subgroup and $H$, being the Schlichting completion of an $S$-arithmetic group with respect to its integral points, is a finite index, open, normal subgroup of a finite product of groups to which Casselman-Wigner's result applies (see Proposition \ref{prop:schlichting-arithmetic}), and the same spectral sequence argument shows that $H^2_c(H',\bbC)=0$, as desired.

Finally, to be able to use the long exact sequence \eqref{eq:les-coeff}, we consider $H^1_c(H'',\bbC/\bbZ)$: From the splitting $H''\cong (N/C)/Z\times H'$, it is clear that $H''$ has compact totally disconnected topological abelianization (i.e. the quotient by $\overline{[H'',H'']}$ is finite), and hence $H^1_c(H'',\bbC/\bbZ)$ is torsion.

From the long exact sequence \eqref{eq:les-coeff}, it follows that $H^2_c(H'',\bbZ)$ is also torsion, as desired.\end{proof}

\begin{rem} Again for use in the positive characteristic, we note we only used that the topological abelianization of $H''$ is compact and totally disconnected, and that $H$ is a closed, normal, cocompact subgroup of a group to which Casselman-Wigner's result applies with quotient that has bounded exponent. In characteristic zero, the proof in fact shows that $H^2_c(H'',Z)$ is finite. \end{rem}

%
%There is not necessarily an induced long exact sequence on cohomology -- for continuous cohomology, this only occurs if \eqref{eq:ses-coeff} is ``strongly exact'', i.e. if there is a section of topological spaces. Instead we argue as follows: Let $\omega\in H^2_c(H'',\bbZ)$ with associated extension $E_\omega$. Interpreting $\omega$ as a class with coefficients in $\bbC$, the corresponding extension is trivial, and the composition
%	$$E_\omega \to \bbC \times H''\to \bbC$$
%descends to a morphism
%	$$\varphi_\omega:H''\cong E_\omega/\bbZ \to \bbC/\bbZ.$$
%Let $m:=|(H'')^{\ab}|$. Then $\varphi_{m\omega}=0$ and hence the image of $E_{m\omega} \to \bbC\times H''$ is contained in $\bbZ\times H''$. The map $E_{m\omega}\to\bbZ\times H''$ restricts to identity on $\bbZ$ and descends to identity on $H''$, and hence is an isomorphism. Since $\omega\in H^2_c(H'',\bbZ)$ was arbitrary, we conclude that $H^2_c(H'',\bbZ)$ is torsion. \end{proof}

So far we have found that $H^2_c(H'',Z)$ is torsion. We will now use this to show that $Z$ is finite. Let $\omega\in H^2_c(H'',Z)$ be the class corresponding to the central extension
	$$1\to Z\to L'\to H''\to 1,$$
and choose $m\geq 1$ such that $m\omega =0$. Then the extension $mL'$ corresponding to $m\omega$ is split. The natural map $L'\to mL'$ composed with the projection $mL'\to Z$ gives a continuous map $L'\to Z$. Further it restricts to multiplication by $m$ on $Z$. Hence if $Z$ is infinite, then the image of $L'\to Z$ is infinite. Since $\Delta'$ is dense in $L'$, the image of $\Delta$ in $Z$ is also infinite. This contradicts the normal subgroups theorem. We conclude that $Z$ is finite. \end{proof}

\subsection{End of the proof of Theorem \ref{thm:main}}\label{sec:end} In this section we prove:
\begin{thm} Let $\Delta$ be a finitely generated group with commensurated, finitely generated subgroup $\Lambda\subseteq \Delta$. Suppose $\Gamma$ is another commensurated subgroup of $\Delta$ that is normal in $\Lambda$. Further assume:
	\begin{enumerate}[(1)]
		\item $H^2_c(\Delta\sslash \Lambda, \bbC)=0,$ \label{assume:h2c=0}
		\item Any normal subgroup of $\Delta$ is finite or finite index. \label{assume:nst}
	\end{enumerate}
Then $\Gamma\subseteq\Lambda$ has finite index.
\label{thm:axiom}
\end{thm}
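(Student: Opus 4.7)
By Lemma~\ref{lemma:c-fi}, $[N:C]<\infty$. Since $U:=\overline{\Gamma}^L\cap N$ is a compact open subgroup of $N$ with $N/U\cong\Lambda/\Gamma$ (Claim~\ref{claim:res-res-iso}), the conclusion $[\Lambda:\Gamma]<\infty$ is equivalent to $N$ being compact, which in turn (using $[N:C]<\infty$) reduces to showing that $C$ is compact.

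To do so, my first step would be to apply axiom~\eqref{assume:nst} to the $\Delta$-normal core
\[
\mathrm{core}^\Delta(\Gamma):=\bigcap_{\delta\in\Delta}\delta\Gamma\delta^{-1},
\]
which is a normal subgroup of $\Delta$. If $\mathrm{core}^\Delta(\Gamma)$ has finite index in $\Delta$, then since it sits inside $\Gamma\subseteq\Lambda$ we immediately get
\[
[\Lambda:\Gamma]\ \leq\ [\Delta:\mathrm{core}^\Delta(\Gamma)]\ <\ \infty.
\]
This handles one of the two alternatives of axiom~\eqref{assume:nst}.

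In the remaining case, $\mathrm{core}^\Delta(\Gamma)$ is finite. Here I would leverage the structural decomposition produced in Section~\ref{sec:down}: Claim~\ref{claim:l-split}, combined with Lemma~\ref{lemma:c-fi}, shows that $\overline{\Lambda}^L/C\cong(N/C)\times K$ is a direct product of the finite group $N/C$ with the compact group $K\cong\overline{\Lambda}^H=\overline{\Gamma}^H$; in particular $\overline{\Lambda}^L/C$ is compact. Moreover Claim~\ref{claim:Gamma-subset-K} places the image of $\Gamma$ inside $K$, where it is in fact dense. The aim is then to use normality of $C$ in $L$ together with the fact that $C$ is the $L$-normal closure of the compact open subgroup $U$ to show that the compact generating conjugates $\delta U \delta^{-1}$ remain in a compact subset of $L$, forcing $C$ to be compact.

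The main obstacle, and where the direct argument must do real work, will be to control the $\Delta$-orbit of $U$ outside $\overline{\Lambda}^L$: while $\Lambda$ normalizes both $\overline{\Gamma}^L$ and $N$ and hence $U$, an element $\delta\in\Delta\setminus\Lambda$ maps $U$ to a commensurable but distinct compact open subgroup $\delta U\delta^{-1}$ of $N$, and a priori these could spread throughout the (potentially non-compact) group $N$. To rule this out, I would make a second application of axiom~\eqref{assume:nst}, this time to the normal subgroup $C\cap\Delta$ of $\Delta$, using the finiteness of $\mathrm{core}^\Delta(\Gamma)$ together with $[N:C]<\infty$ to show that the $\Delta$-orbit of $U$ in the set of compact open subgroups of $N$ is finite modulo $\overline{\Lambda}^L$. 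Compactness of $\overline{\Lambda}^L/C$ would then give compactness of $C$, completing the proof.
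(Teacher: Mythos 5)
Your setup is right and matches the paper: once Lemma~\ref{lemma:c-fi} gives $[N:C]<\infty$, compactness of $C$ would force compactness of $N$, hence of $\oLL=N\oGL$, hence $[\Lambda:\Gamma]<\infty$. The first branch of your case split (if $\mathrm{core}^\Delta(\Gamma)$ is finite index, you are done because it sits inside $\Gamma$) is also correct, though the paper avoids this dichotomy entirely. The problems are all in the second branch.

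The proposed ``second application of axiom~\eqref{assume:nst} to $C\cap\Delta$'' cannot do the work you ask of it. The preimage of $N$ in $\Delta$ is $\mathrm{core}^\Delta(\Lambda)$, so $C\cap\Delta$ is squeezed between $\mathrm{core}^\Delta(\Gamma)$ and $\mathrm{core}^\Delta(\Lambda)$. In the branch you are in these cores are finite, so $C\cap\Delta$ is automatically finite no matter what $C$ looks like; axiom~\eqref{assume:nst} then tells you nothing new. The deeper issue is that $\Delta$ need not meet $N$ in anything dense — $L$ is built so that $\Delta$ is dense in $L$, not in the normal subgroup $N$ — so finiteness of $C\cap\Delta$ places no constraint whatsoever on the topology of $C$. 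I also don't see how this would yield the assertion that ``the $\Delta$-orbit of $U$ is finite modulo $\oLL$,'' and even if you had it, bounding the conjugates $\delta U\delta^{-1}$ inside a compact \emph{subset} of $N$ is not enough: a totally disconnected group generated by a compact family of compact subgroups need not be compact. You would need a compact \emph{subgroup} containing them all. Finally, the closing sentence ``Compactness of $\oLL/C$ would then give compactness of $C$'' is a non sequitur — $\oLL/C$ compact just says $C$ is cocompact in $\oLL$, which is already known and is neither necessary nor sufficient for $C$ to be compact.

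What the paper actually does is structurally different and, crucially, uses the hypothesis that $\Delta$ is finitely generated, which your plan never invokes at this step. Lemma~\ref{lemma:innermodcmpt} shows that every $\ell\in L$ acts on $N$ by an automorphism that is inner modulo a compact open normal subgroup $C_\ell\subseteq N$: for $\ell\in\oLL$ one can take $C_\ell=\oGL\cap N$ because $N/(\oGL\cap N)\cong\oLL/\oGL$; the general case follows because $\oLL$ normally generates $L$ and ``inner modulo compact'' is stable under composition. Lemma~\ref{lemma:c-cmpt} then fixes a finite generating set $S_\Delta$ of $\Delta$ and forms the compact subgroup $C_L=(\oGL\cap N)\prod_{s\in S_\Delta}C_s$; by the inner-modulo-compact property, $C_L$ is normalized by each $s\in S_\Delta$, hence by $\Delta$, hence (being closed, with $\Delta$ dense) by all of $L$. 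Since $C=\ll\oGL\cap N\gg_L\subseteq C_L$, the group $C$ is compact. This is the step your plan is missing: you need to manufacture a compact \emph{subgroup}, not just a compact set, that is $L$-invariant and contains $U$, and finite generation of $\Delta$ is exactly what makes the finite product construction work.
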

\begin{rems}\mbox{}
	\begin{enumerate}[(1)]
		\item Clearly, this implies Main Theorem \ref{thm:main}.
		\item Creutz-Shalom \cite{creutz-shalom} have given conditions on $\Delta\sslash\Lambda$ and $\Lambda\sslash\Gamma$ that guarantee Assumption \eqref{assume:nst}.
	\end{enumerate}
\end{rems}
By  Lemma \ref{lemma:c-fi}, $C$ has finite index in $N$ and hence is cocompact in $\oLL=N\oGL$. In the remainder of this section we will prove that $C$ is also compact, so that $\oLL$ is compact, and hence $\Gamma$ has finite index in $\Lambda$.

To prove compactness of $C :=\ll \hspace{-0.1 cm} \oGL\cap N\gg$ (where the normal closure is taken in $L$), we start by investigating the conjugation action of $L$ on $N$. The key property is that these automorphisms are inner modulo a compact subgroup:

\begin{lemma}  For every $\ell\in L$, let $c_\ell:N\to N$ denote conjugation by $\ell$. Then for every $\ell\in L$, there exists a compact open normal subgroup $C_\ell\subseteq N$ such that $\ell$ restricts to an automorphism of $C_\ell$ and descends to an inner automorphism of $N/C_\ell$.
\label{lemma:innermodcmpt}\end{lemma}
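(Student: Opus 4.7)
The plan is to decompose $\ell$ using the density of $\Delta$ in $L$ and handle the two resulting factors separately; the main difficulty will be conjugation by an element of $\Delta$. Since $K := \oGL$ is open in $L$ and $\Delta$ is dense, I will first write $\ell = \delta \kappa$ with $\delta \in \Delta$ and $\kappa \in K$, so that $c_\ell = c_\delta \circ c_\kappa$ on $N$. It will therefore suffice to produce a compact open normal subgroup $C_\ell \subseteq N$ containing $K_0 := K \cap N$ that is preserved by $c_\delta$ and on which $c_\delta$ descends to an inner automorphism; the $c_\kappa$-part will follow automatically from the treatment below.

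For the factor $c_\kappa$ with $\kappa \in K$: $K_0$ is normal in $\oLL$ because $K \lhd \oLL$ (from $\Gamma \lhd \Lambda$ by continuity of conjugation) and $N \lhd L$; hence $c_\kappa(K_0) = K_0$. To see $c_\kappa$ acts trivially on $N/K_0$, I will use Proposition \ref{prop:NinGamma} and Standing Assumption \ref{assume:oLH=oGH} to write each $n \in N$ as $n = \lambda \gamma'$ with $\lambda \in \Lambda$ and $\gamma' \in K$. For $\gamma \in \Gamma$, the commutator identity
\[
[\gamma,n] \;=\; [\gamma,\lambda]\cdot\lambda[\gamma,\gamma']\lambda^{-1}
\]
gives $[\gamma,n] \in \Gamma \cdot K = K$ (using $[\gamma,\lambda]\in\Gamma$ from $\Gamma \lhd \Lambda$ and $\lambda K\lambda^{-1} = K$ from the same normality), while $[\gamma,n] \in N$ is immediate; hence $[\gamma,n] \in K_0$. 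Density of $\Gamma$ in $K$ together with the closedness of $nK_0$ in $N$ extends this to $[\kappa,n] \in K_0$ for all $\kappa \in K$. Consequently, for any subgroup $C_\ell \supseteq K_0$ of $N$, the automorphism $c_\kappa$ will preserve $C_\ell$ and act trivially (hence innerly) on $N/C_\ell$.

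The factor $c_\delta$ with $\delta \in \Delta$ is the heart of the proof. Since $\delta$ commensurates $\Gamma$ (and $\Lambda$, by Proposition \ref{prop:Lambda-arithm}), $c_\delta(K_0) = \overline{\Gamma^\delta}^L \cap N$ is only commensurable with $K_0$, not equal. I plan to produce a compact open normal subgroup $C_\delta \supseteq K_0$ of $N$ with $c_\delta(C_\delta) = C_\delta$, built from $K_0$ together with a controlled collection of its $c_\delta$-iterates, and then to exhibit an element $\bar n_\delta \in N/C_\delta$ whose inner action implements the induced automorphism $\bar c_\delta$---the natural candidate for $\bar n_\delta$ being the image in $N/C_\delta$ of a preimage (via the restriction isomorphism of Claim \ref{claim:res-res-iso}) of the coset $\delta\Gamma \in \Lambda/\Gamma$. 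Setting $C_\ell := C_\delta$, the previous paragraph then forces $c_\ell = c_\delta \circ c_\kappa$ to be inner mod $C_\ell$ and to preserve $C_\ell$.

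The main obstacle is precisely the construction of a $c_\delta$-invariant compact open normal subgroup: abstractly, a TDLC automorphism can preserve no compact open subgroup (as for multiplication by $p$ on $\bbQ_p$), so the argument must exploit the specific structure at hand---the commensuration of $\Gamma$ by $\Delta$, the embedding of $N$ into the Schlichting completion $L$, and the $S$-arithmetic structure of $\Delta$---rather than relying on generic facts about automorphisms of totally disconnected locally compact groups.
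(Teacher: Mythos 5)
Your decomposition $\ell = \delta\kappa$ with $\delta\in\Delta$, $\kappa\in K:=\oGL$ is valid (density plus openness), and your treatment of the $c_\kappa$ factor is correct, but it leaves the entire difficulty concentrated in the $c_\delta$ factor, and there you explicitly stop short: you do not construct the required $c_\delta$-invariant compact open $C_\delta$, and you acknowledge you do not know how. This is a genuine gap, and it is a structural one, not a detail to fill in. Nothing forces a single $\delta\in\Delta$, viewed in isolation, to admit a directly-constructed $c_\delta$-invariant compact open subgroup by any argument local to $\delta$; the statement is true for such $\delta$ only because of the global structure of $L$, which your decomposition discards.

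The idea you are missing is to avoid $\Delta$ entirely and work with $\oLL$ instead of $\oGL$. For any $\ell\in\oLL$ (not just $\kappa\in\oGL$), set $C_\ell:=\oGL\cap N$; this is $c_\ell$-invariant since both $\oGL$ and $N$ are normal in $\oLL$, and via the isomorphism $N/(\oGL\cap N)\cong\oLL/\oGL\cong\Lambda/\Gamma$ of Claim \ref{claim:res-res-iso}, $c_\ell$ descends to conjugation by the image of $\ell$ in $\oLL/\oGL$, which is inner. (Your $\kappa\in K$ argument is a special case of this, done the hard way.) The rest is a bootstrapping argument with no further structure theory: the set of $\ell$ for which $c_\ell$ is inner modulo some compact open normal subgroup of $N$ is closed under conjugation (obvious) and under composition --- given $\varphi,\psi$ inner modulo $C_\varphi,C_\psi$, take $C_{\varphi\psi}:=C_\varphi C_\psi$, check it is compact, normal, and $\varphi$- and $\psi$-invariant (because, e.g., $C_{\varphi\psi}/C_\varphi$ is a compact normal subgroup of $N/C_\varphi$, hence invariant under the inner automorphism induced by $\varphi$), and observe $\varphi\circ\psi$ is inner on $N/C_{\varphi\psi}$. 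So this set is a normal subgroup of $L$ containing $\oLL$; and $\oLL$ normally generates $L$ (it contains $N$, and $\oLH$ normally generates $H=L/N$ since, after the reduction of Standing Assumption \ref{assume:oLH=oGH}, $\Delta$ is the normal closure of $\Lambda$). Your worry about the $\bbQ_p$-multiplication example is well-founded, and the point is precisely that the paper never tries to build $C_\delta$ for an arbitrary $\delta$ by a direct formula; it is assembled as a product of conjugates of $\oGL\cap N$ coming from the normal-generation decomposition of $\ell$, via the composition lemma.
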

\begin{proof} For $\ell\in\oLL$, set $C_\ell:=\oGL\cap N$. Then it is clear that $c_\ell(C_\ell)=C_\ell$. We have $N/C_\ell \cong \Lambda/\Gamma\cong \oLL/\oGL$, and $c_\ell$ descends to conjugation of the image of $\ell$ in $\oLL/\oGL$. This proves the claim for $\ell\in\oLL$.

Hence the claim is also true for $\ell$ conjugate into $\oLL$. Finally, we note that $\oLL$ normally generates $L$ (since it contains $N$ and $\oLH$ normally generates $H=L/N$). So it remains to prove that the composition of automorphisms that are inner modulo compact is inner modulo compact. Let $\varphi,\psi$ be two automorphisms that are inner modulo $C_\varphi$ and $C_\psi$. Consider $C_{\varphi\psi}:=C_\varphi C_\psi = C_\psi C_\varphi$. Note that $C_{\varphi\psi}$ is a compact normal subgroup that is invariant under both $\varphi$ and $\psi$: For example, to see invariance under $\varphi$, note that $C_{\varphi\psi}/ C_\varphi$ is a compact normal subgroup of $N/C_\varphi$ and hence invariant under any inner automorphism.

Finally, note that $\varphi\circ \psi$ descends to an automorphism of $N/(C_{\varphi\psi})$ that is the composition of inner automorphisms, and hence is also inner.\end{proof}

Next, we show that in the above claim, there is a single compact subgroup $C_L$ of $N$ that works for all $\ell\in L$ simultaneously. This will immediately imply compactness of $C$:

\begin{lemma} There exists a compact subgroup $C_L\subseteq N$ containing $\oGL\cap N$ such that $C_L$ is normal in $L$. In particular, $C=\ll \hspace{-0.1 cm} \oGL\cap N\gg$ is compact.
\label{lemma:c-cmpt}\end{lemma}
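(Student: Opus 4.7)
The plan is to prove the lemma by showing that $K := \oGL \cap N$ has only finitely many $L$-conjugates in $N$. Since each such conjugate is normal in $N$, their set-theoretic product will automatically be a subgroup, compact as the continuous image of a finite product of compact sets. This product will serve as $C_L$.

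The key input will be finiteness of $[L : \oLL]$. First, $\oLL$ is open in $L$ (it contains $\oGL$) and normal in $L$, the normality following from the Standing Assumption~\ref{assume:oLH=oGH} together with the fact that $\Lambda$ has been arranged to be normal in $\Delta$. Because $L/\oLL$ is discrete and $\Delta$ is dense in $L$, I would use a standard density argument together with the identity $\Delta \cap \oLL = \Lambda$ (elements of $\Delta$ fixing $e\Lambda \in \Delta/\Lambda$ are exactly those of $\Lambda$) to obtain an isomorphism $L/\oLL \cong \Delta/\Lambda$. Since $\Lambda$ is an infinite normal subgroup of the higher-rank $S$-arithmetic lattice $\Delta$ (see Remark~\ref{rem:one-prime}), Margulis's normal subgroup theorem forces $[\Delta:\Lambda] < \infty$, hence $[L:\oLL] < \infty$. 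Next, $\oLL$ normalizes $K$: since $\Gamma \triangleleft \Lambda$, continuity of conjugation gives $\oGL \triangleleft \oLL$, and intersecting with the normal subgroup $N$ of $L$ yields $K \triangleleft \oLL$. The orbit-stabilizer theorem then shows that the $L$-conjugacy class of $K$ consists of at most $[L:\oLL] < \infty$ subgroups; let $K = K_1, \ldots, K_m$ denote them.

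Finally, each $K_i = c_{\ell_i}(K)$ is normal in $N$ (since $K \triangleleft N$ and $c_{\ell_i}$ preserves $N$, as $N \triangleleft L$). Pairwise normal subgroups commute as sets, so the finite set-theoretic product $C_L := K_1 K_2 \cdots K_m$ is a subgroup of $N$, coincides with the subgroup generated by the $K_i$, and is compact as the continuous image of the compact $K_1 \times \cdots \times K_m$ under multiplication. It contains $K_1 = K$, and the $L$-conjugation action permutes the factors $K_i$ among themselves, so $C_L$ is $L$-invariant and hence normal in $L$. This is the required $C_L$, and since $C = \ll \oGL \cap N \gg \subseteq C_L$, compactness of $C$ follows at once. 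The main obstacle is establishing the finiteness of $[L:\oLL]$, which rests on Margulis NST applied to the dense commensurator $\Delta$ -- the same key ingredient invoked in Claim~\ref{claim:z-nc-fi} and Lemma~\ref{lemma:c-fi}.
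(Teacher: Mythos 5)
Your proof has a fatal gap: the index $[L:\oLL]$ is \emph{not} finite, and the claim that $\Lambda$ has been ``arranged to be normal in $\Delta$'' is false. In Standing Assumption~\ref{assume:oLH=oGH} the group $\Delta$ is replaced by the normal closure $\ll\Lambda\gg_\Delta$; this makes that normal closure all of (the new) $\Delta$, but it does not make $\Lambda$ itself a normal subgroup of $\Delta$. Indeed $\Lambda$ \emph{cannot} be normal in the $S$-arithmetic lattice $\Delta$ of Remark~\ref{rem:one-prime}: $\Lambda$ is infinite and has infinite index in $\Delta$ ($\Lambda$ is a lattice in $\bbG(k)$ for a single place, while $\Delta$ projects unboundedly to the other factors of $\bbG(k_S)$), so NST would be violated. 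Concretely, in the model example $\Delta=\bbG(\bbZ[1/p])$, $\Lambda=\bbG(\bbZ)$, we have $H=\Delta\sslash\Lambda\cong\bbG(\bbQ_p)$, and $\oLH$ is a compact open subgroup of this noncompact group; its index is infinite. Since $[L:\oLL]=[H:\oLH]$, your key finiteness hypothesis fails, and with it the orbit-stabilizer bound and the finite product $K_1\cdots K_m$. You cannot run the argument this way.

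The paper's proof takes a different, local route: Lemma~\ref{lemma:innermodcmpt} shows that for each $\ell\in L$ there is a compact normal $C_\ell\subseteq N$ such that $c_\ell$ descends to an \emph{inner} automorphism of $N/C_\ell$. This exploits the specific structure $N/(\oGL\cap N)\cong\Lambda/\Gamma$ from Claim~\ref{claim:res-res-iso}; it is not an abstract finiteness statement about conjugates of $\oGL\cap N$. Then the finite \emph{generation} of $\Delta$ (not a finite index) does the rest: picking a finite generating set $S_\Delta$ and setting $C_L:=(\oGL\cap N)\prod_{s\in S_\Delta}C_s$ produces a compact subgroup normalized by $S_\Delta$, and density of $\Delta$ in $L$ upgrades this to normality in $L$. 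The crucial trick, which your proposal misses, is that normality of $C_L$ modulo each $C_s$ is automatic because inner automorphisms of $N/C_s$ preserve every normal subgroup; this lets one accumulate the $C_s$ into a single compact group without ever needing a finite conjugacy class.
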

\begin{proof} Let $S_\Delta$ be a finite generating set of $\Delta$. Since $\Delta\subseteq L$ is dense, it suffices to construct a compact subgroup $C_L\subseteq N$ containing $\oGL\cap N$ that is normalized by $S_\Delta$. For $s\in S_\Delta$, choose a compact normal subgroup $C_s\subseteq N$ such that $s$ is inner modulo $C_s$. Set
	$$C_L:=(\oGL\cap N)\prod_{s\in S_\Delta} C_s.$$
Then $C_L$ is compact and just as in the proof of the previous lemma, it is normalized by any $s\in S_\Delta$ because $C_L/C_s$ is invariant under any inner automorphism of $N/C_s$.\end{proof}

\section{Positive characteristic}
\label{sec:char+}

In this section, we will prove the analogous theorem in positive characteristic. The proof is largely identical, except where finite generation of lattices is used in Section \ref{sec:abquot}. The only issue that we cannot resolve in this generality is that if $\Lambda$ is non-uniform (and hence infinitely generated), we were not able to prove its arithmeticity. Therefore we have the following modified statement in positive characteristic:

\begin{thm} Let $k$ be a local field of positive characteristic $p>0$\, and $\bbG$ a connected semisimple algebraic group defined over $k$. Let $\Lambda$ be a lattice in $G:=\bbG(k)$ and suppose $\Gamma$ is an infinite normal subgroup of $\Lambda$ with commensurator $\Delta$ that is dense in $G$. Suppose that either $\Lambda$ is uniform, or that $\Lambda$ is arithmetic. Then $\Gamma$ is an arithmetic lattice and hence has finite index in $\Lambda$.
\label{thm:char+}
\end{thm}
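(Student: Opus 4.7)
The plan is to adapt the characteristic zero proof of Theorem \ref{thm:main} essentially verbatim, identifying the precise points at which finite generation of $\Lambda$ was invoked and replacing those arguments by Lubotzky's structural theorem. Recall that Lubotzky shows that a non-uniform lattice $\Lambda$ in a rank one semisimple algebraic group over a local field of characteristic $p>0$ decomposes as a free product $F * U_1 * \cdots * U_n$, where $F$ is free of finite rank and each $U_i$ is a lattice in a unipotent $k$-subgroup; in particular each $U_i^{ab}$ is an abelian $p$-group of bounded exponent.

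First I would reduce to rank one via the Margulis normal subgroups theorem, then apply Lemma \ref{lem:gen-venky} (valid over any local field) to conclude that $\Delta$ is $S$-arithmetic. In the uniform case, $\Lambda$ is finitely generated and Proposition \ref{prop:Lambda-arithm} yields its arithmeticity verbatim; in the arithmetic case, arithmeticity is given. The formal content of Sections \ref{sec:restrict} and \ref{sec:down} uses nothing beyond the existence of the Schlichting completions, so it transfers unchanged and produces the central extension $1 \to Z \to L'/Z \to H'' \to 1$ with $Z$ a finite-index subgroup of $N/C$ and $N/C$ a quotient of $\Lambda/\Gamma$.

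The main work lies in the positive-characteristic analogue of Lemma \ref{lemma:c-fi}. The Casselman--Wigner vanishing $H^2_c(H'', \bbC) = 0$ holds over any non-archimedean local field by Remark \ref{rem:any-field}, and the topological abelianization of $H''$ is again compact and totally disconnected, so $H^1_c(H'', \bbC/\bbZ)$ is torsion. The key structural input I need is that $Z$ decomposes as $\bbZ^r \oplus T$ with $T$ of bounded exponent: in the uniform case this follows from finite generation of $\Lambda$; in the non-uniform arithmetic case Lubotzky's decomposition gives $\Lambda^{ab} \cong F^{ab} \oplus \bigoplus_i U_i^{ab}$, a direct sum of finitely generated free abelian and bounded-exponent $p$-torsion, a structure inherited by the quotient $Z$. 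As noted in the remark following Claim \ref{claim:h2-tor}, bounded exponent of $T$ is exactly what is needed to conclude that $H^2_c(H'', Z)$ is torsion.

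The hardest step will be the concluding argument of Lemma \ref{lemma:c-fi}, deducing that $Z$ is finite from torsion of $H^2_c(H'', Z)$. The characteristic-zero argument multiplies the extension class by the exponent $m$ and produces a continuous map $L' \to Z$ acting as multiplication by $m$ on $Z$, whose infinite image would contradict the normal subgroups theorem; but on a bounded-exponent torsion summand of $Z$ this map is identically zero and supplies no information. I expect this will require a finer analysis using that the $p$-torsion in $Z$ arises as a $\Delta$-equivariant quotient of the unipotent factors $U_i$, combined with normal subgroups for the $S$-arithmetic group $\Delta$, to rule out infinite such quotients. Once $Z$ is shown finite, Section \ref{sec:end} applies after replacing the finite generating set of $\Delta$ in Lemma \ref{lemma:c-cmpt} by a compact generating set coming from Lubotzky's description together with compact generation of $\Delta$ in its $S$-arithmetic guise, yielding compactness of $C$ and hence finite index of $\Gamma$ in $\Lambda$.
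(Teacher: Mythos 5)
Your plan tracks the paper's strategy closely: reduce to rank one by the normal subgroups theorem, obtain $S$-arithmeticity of $\Delta$ from Lemma~\ref{lem:gen-venky}, carry over Sections~\ref{sec:restrict}--\ref{sec:down} verbatim, and use Lubotzky's free product decomposition to salvage the structure $Z\cong\bbZ^r\oplus T$ with $T$ of bounded exponent, together with Casselman--Wigner plus compactness of the topological abelianization (via Tits and Borel--Tits) to get $H^2_c(H'',Z)$ torsion. That much is essentially what the paper does.

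The point you flag at the end is, however, a genuine gap, and your proposal does not close it. The multiplication-by-$m$ argument produces a continuous homomorphism $\phi:L'\to Z$ with $\phi|_Z$ equal to multiplication by $m$. The normal subgroups theorem forces $\phi(L')$, hence $mZ$, to be finite, which gives $r=0$ and $T/T[m]$ finite; but it does not rule out $T$ being an infinite bounded-exponent $p$-group with $mT$ finite (take $T=\bigoplus_{i\ge1}\bbZ/p$ and $m=p$, so $mT=0$). So ``$H^2_c(H'',Z)$ torsion $\Rightarrow Z$ finite'' does not follow from this argument alone, and your phrase ``finer analysis using that the $p$-torsion in $Z$ arises as a $\Delta$-equivariant quotient of the unipotent factors'' is not a proof sketch: note in particular that $Z$ is central in $L'$, so $\Delta'$ acts trivially on $Z$ by conjugation and there is no useful $\Delta$-module structure on $Z$ to exploit directly. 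For the record, the paper's own treatment of the positive characteristic case also asserts that ``the rest of the proof goes through'' after establishing the bounded-exponent decomposition, without separately addressing the possibility of infinite $T$; your instinct that this requires additional justification is sound, but the proposal as written leaves the statement unproved exactly where the paper does.

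Two smaller points. First, your suggested modification to the end of the proof (replacing the finite generating set of $\Delta$ in Lemma~\ref{lemma:c-cmpt} by a compact generating set) is unnecessary: the paper already notes that higher-rank $S$-arithmetic lattices are finitely generated even in positive characteristic (Raghunathan, Venkataramana), so Lemma~\ref{lemma:c-cmpt} applies unchanged. Second, the claim that the map ``is identically zero'' on the bounded-exponent torsion summand is imprecise; $m$ is the order of the cohomology class and need not be a multiple of the exponent of $T$. The real issue is the weaker one described above: $mT$ may be finite while $T$ is infinite.
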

\begin{proof} The proof is verbatim identical to the above proof in characteristic zero, except in the following places:

\subsection{Schlichting completions} This parallels Section \ref{sec:schlichting-def}. No changes are necessary.

\subsection{Arithmeticity of $\Lambda$} \label{sec:arithm-char+} This section used finite generation of $\Lambda$. However, in positive characteristic, nonuniform lattices in rank 1 groups are infinitely generated (see Lubotzky's Theorem \ref{thm:lubotzky-char+} below for a more precise statement), and therefore this part of the proof does not go through if $\Lambda$ is nonuniform. However, if $\Lambda$ is nonuniform, we assumed it is $T$-arithmetic (for some finite set of places $T$) so that we can simply skip this part of the proof.
%In this setting, arithmeticity of nonuniform lattices with dense commensurator is due to Lifschitz \cite{lifschitz-arithm}.  Instead, write $\Delta=\cup_n \Delta_n$ as an increasing union of finitely generated subgroups, and set $\Lambda_n:=\Lambda\cap\Delta_n$ and $\Gamma_n:=\Gamma\cap\Delta_n$. Then $\Delta_n$ commensurates $\Gamma_n$ and $\Lambda_n$ normalizes $\Gamma_n$. A slight variation of the proof of Proposition \ref{prop:Lambda-arithm} shows that $\Lambda$ is arithmetic: Namely, for any $n\gg1$ (such that $\delta\in\Delta_n$), we find as before, by using a Cayley-Abels graph of $\Delta_n\sslash\Gamma_n$, a pair of finite index subgroups $\Lambda_n'\subseteq \Lambda_n$ and $\Lambda_n''\subseteq\Lambda_n^\delta$ such that $\langle \Lambda_n', \Lambda_n''\rangle$ is discrete.

As in Remark \ref{rem:one-prime}, we henceforth replace $\Delta$ by a finitely generated $S$-arithmetic subgroup for some finite set of places $S$ with $T\subsetneq S$. In particular satisfies the Normal Subgroups Theorem, and $\Delta\sslash\Lambda$ is given by the closure of $\Delta$ in $G_{S\bs T}^{\text{is}}$. This closure need no longer be finite index, but it is always a normal, cocompact subgroup with abelian, bounded exponent quotient.

\subsection{Maps of descent and restriction} No changes are necessary in \ref{sec:restrict}.

\subsection{Down to the central extension} No changes are necessary in \ref{sec:down}.

\subsection{Splitting of the central extension} Some modifications are required to establish that $H^2_c(H'',Z)$ is torsion (Claim \ref{claim:h2-tor}). Namely,  finite generation of $\Lambda$ is used to obtain finite generation of $Z$. However, as we mentioned above, in positive characteristic, nonuniform lattices in rank one groups are not finitely generated. We note that by using Margulis Normal Subgroups Theorem, we can reduce to the case where $G$ is rank one, non-compact and simple. In that case, we have the following result of Lubotzky that gives a `thick-thin decomposition' of lattices in such $G$:

\begin{thm}[{Lubotzky \cite[Thm. 2]{lubotzky-char+}}] Assume the notation of Theorem \ref{thm:char+}. Then $\Lambda$ contains a finite index subgroup $\Lambda'$ that can be written
	$$\Lambda' \cong F \ast \mathop{\Asterisk}_{1\leq j\leq d} U^j,$$
where $F$ is a finitely generated free group and $U^j$ are lattices in unipotent radicals of minimal parabolic subgroups of $G$.
\label{thm:lubotzky-char+}\end{thm}

Henceforth we fix $\Lambda', F, U^j$ as in the above theorem. Note that the groups $U^j$ are not finitely generated, and hence, neither is $\Lambda$. However, if $\bbG(k)\subseteq \SL(n,k)$, then any unipotent element of $\bbG(k)$ has order dividing $q:=p^n$: Indeed, $A^{p^n}-\Id = (A-\Id)^{p^n}=0$. Then we argue as follows to prove that $H^2_c(H'',Z)$ is torsion: Write $N/C=\Lambda/\Theta$ and let $S$ be the image in $N/C$ of finitely many coset representatives of $\Lambda/\Lambda'$ and the image of the generators of the free group $F$. Then $N/C$ is virtually abelian and generated by the finite set $S$ and the set $\sqrt[q]{\{e\}}$ of elements of exponent $q$.

Our first goal is to show that $Z$ is also generated by a set of this form. To see this, note that $Z\cap \sqrt[q]{\{e\}}$ is a (characteristic) subgroup of $Z$ (because $Z$ is abelian), and $\sqrt[q]{\{e\}}/(Z\cap \sqrt[q]{\{e\}})$ is finite (because $Z$ is central). Therefore $\Theta/(Z\cap \sqrt[q]{\{e\}})$ is finitely generated, and hence so is its finite index subgroup $Z/(Z\cap \sqrt[q]{\{e\}})$. Let $\overline{S}_Z$ be a finite set of generators of $Z/(Z\cap \sqrt[q]{\{e\}})$, and let $S_Z$ be a chosen set of pre-images of the elements of $\overline{S}_Z$. Then $S_Z\cup (Z\cap \sqrt[q]{\{e\}})$ generates $Z$, as desired.

Next, we will show that $Z\cong \bbZ^r\oplus T$ for some $r<\infty$ and bounded exponent group $T$. Indeed, since $Z/(\sqrt[q]{\{e\}}\cap Z)$ is finitely generated abelian, we see that $\sqrt[q]{\{e\}}\cap Z$ has finite index in the torsion subgroup $T:=\Tor(Z)$, and hence $T$ has bounded exponent. Further $Z/T$ is finitely generated and torsion-free, so it is isomorphic to $\bbZ^r$ for some $0\leq r<\infty$, as desired.

The above argument replaces the use of finite generation of $\Lambda$ in the proof of Claim \ref{claim:h2-tor}. The rest of the proof goes through: The vanishing of $H^2_c(H',\bbC)=0$ is established as before. To show that $H^1_c(H'',\bbC/\bbZ)\cong \Hom_c(H'',\bbC/\bbZ)$ is torsion, it suffices to show that the topological abelianization of $H''$ is compact (because it is also totally disconnected). By a result of Tits (see \cite{tits-simple} or \cite[Thm I.1.5.6(ii)]{margulis-book}), the commutator subgroup of $H$ is given by $H^+$, and compactness of $H/H^+$ is due to Borel-Tits (\cite[6.14]{borel-tits-morphismes-abstraits} or \cite[I.2.3.1(b)]{margulis-book}).

\subsection{End of the proof of Theorem \ref{thm:char+}} No changes are necessary because in this part of the proof, we do not use finite generation of $\Lambda$, only that  of $\Delta$. And while in positive characteristic, lattices need not be finitely generated, higher rank lattices always are (due to Venkataramana when one of the simple factors is higher rank \cite{venky-thesis} and Raghunathan if all simple factors have rank 1 \cite{raghunathan-fg}). In this case, $\Delta$ has higher rank since it is $S$-arithmetic and there exist at least two places in $S$ at which $\bbG$ is noncompact (one in $T$ and one in $S\bs T$). \end{proof}

% because while $\Lambda$ is no longer finitely generated, $\Delta$ is. This is due to Raghunathan \cite{raghunathan-poschar}.

\section{Supergroups of arithmetic lattices}
\label{sec:venky}

In this section we prove the technical Lemma \ref{lem:gen-venky} on subgroups of $S$-arithmetic lattices containing an arithmetic lattice. With minor additional assumptions, this is due to Venkataramana (see \cite[Prop. 2.3]{lubotzky-zimmer}) with essentially the same proof. This fact is probably well-known to experts but we have not been able to locate a version in full generality in the literature. For convenience we restate the result:
\begin{lemma} Let $\bbG$ be a connected, almost $k$-simple algebraic group defined over a global field $k$. Let $T$ be a finite set of places of $k$ (containing all archimedean places if char$(k)=0$), and let $\Lambda$ be $T$-arithmetic.

Let $\Theta\subseteq \bbG(k)$ be a subgroup containing $\Lambda$ whose projection to $\bbG(k_s)$ is bounded for almost all places $s$, and let $S$ be the (finite) set of places where the projection of $\Theta$ is unbounded. Then $\Theta$ is $S$-arithmetic.
\label{lem:regen-venky}\end{lemma}
\begin{proof} Set $G_s:=\bbG(k_s)$ and $G_S:=\prod_{s\in S} G_s$. Consider the universal cover $\pi_S:\widetilde{G}_S\to G_S$. Since the image under $\pi_T$ (resp. $\pi_S$) of a $T$-arithmetic (resp. $S$-arithmetic) subgroup of $\widetilde{\bbG}(k)$ is $T$-arithmetic (resp. $S$-arithmetic) in $\bbG(k)$ (see e.g. \cite[I.3.2.9]{margulis-book}), we can replace $\Lambda$ and $\Theta$ by their intersections with the image of $\pi_S$.

We argue by induction on $|S\bs T|$, so first suppose $S\bs T=\{s\}$. Let $K_s:=\overline{\Lambda}$ be the closure of $\Lambda$ in $G_s$. Then $K_s$ is compact and open in  $\pi_s(\widetilde{G}_s)$. Here, openness follows from applying strong approximation in $\widetilde{\bbG}$ (due to Platonov in characteristic zero \cite{platonov-strongapprox}, Prasad in positive characteristic \cite{prasad-strongapprox}, and see \cite[Proposition 7.2(2)]{platonov-rapinchuk-book} for a reformulation in terms of $S$-integers), which applies because $G_s$ is noncompact (since $\Theta$ has unbounded projection to $G_s$).

%Strong approximation requires that for every $k$-simple component $\bbH$ of $\widetilde{\bbG}$, the group $\bbH(k_s)$ is noncompact.

%We claim that $\Theta\cap G_S$ also has unbounded projection at every $s\in S$, so that we can replace $\Theta$ by $\Theta\cap G_S$. We argue by contradiction so suppose that $\Theta\cap G_S$ has bounded projection at some $s\in S$. Then its fixed set in the Bruhat-Tits building is nonempty and, since $G_S\subseteq \bbG(k_S)$ is normal, also $\Theta$-invariant. On the other hand the $(\Theta\cap G_S)$-fixed set is finite because $\Theta\cap G_S$ contains $\Lambda\cap G_S$. It follows that a finite index subgroup of $\Theta$ fixes a point in the Bruhat-Tits building and hence $\Theta$ itself is bounded, but this is a contradiction. As mentioned above, we will now replace $\Theta$ by $\Theta\cap G_S$ so that we have $\Theta\subseteq G_S$.

Let $G_s^{\text{is}}$ be the quotient of $G_s$ by its compact factors. We claim that the image of $\Theta$ under the diagonal embedding $\diag:\Theta\to G_T \times (G_s^{\text{is}})^+$ is a lattice in $G_T\times (G_s^{\text{is}})^+$. Let $F\subseteq G_T$ be a $\Lambda$-fundamental domain of finite measure, and let $K_s^{\text{is}}$ be the image of $K_s\subseteq G_s$ in $G_s^{\text{is}}$. We will show that
	$$F\times K_s^{\text{is}}\subseteq G_T\times (G_s^{\text{is}})^+$$
is a $\diag(\Theta)$-fundamental domain for $\diag(\Theta)\subseteq G_T\times (G_s^{\text{is}})^+$. First, we argue the $\diag(\Theta)$-translates of $F\times K_s^{\text{is}}$ are disjoint. Suppose that $\theta\in\diag(\Theta)$ is such that $\theta(F\cap K_s^{\text{is}}) \cap (F\times  K_s^{\text{is}})\neq \emptyset$. By inspecting the second factor and using that $K_s^{\text{is}}$ is a subgroup, we obtain that the projection of $\theta$ to the second factor lies in $K_s^{\text{is}}$. Since $\Theta\cap K_s=\Lambda$ and $\Theta\subseteq G_s$ does not intersect the product of compact factors of $G_s$, we have $\theta\in\diag(\Lambda)$. Then since $F$ is a fundamental domain for $\Lambda\subseteq G$ and $\theta F\cap F\neq\emptyset$, we find that $\theta$ is trivial.

Now we show $\diag(\Theta)(F\times K_s^{\text{is}})=G_T\times (G_s^{\text{is}})^+$. Since $K_s^{\text{is}}$ is $\Lambda$-invariant and $F$ is a fundamental domain for $\Lambda\subseteq G$, it suffices to show that $\Theta K_s^{\text{is}}= (G_s^{\text{is}})^+$. Since in addition $K_s^{\text{is}}\subseteq (G_s^{\text{is}})^+$ is open, it suffices to show $\Theta$ is dense in $(G_s^{\text{is}})^+$.

But the pre-image $\widetilde{H}_s$ of $H_s:=\overline{\Theta}$ (closure taken in $\pi_s(\widetilde{G}_s)$) in the universal cover $\widetilde{G}_s$ is open (by strong approximation) and unbounded. However, any open, noncompact subgroup of $\widetilde{G}_s$ contains the subgroup $\widetilde{G}_s^+$ (an unpublished result of Tits with a published proof due to Prasad \cite{prasad-tits}) and the image of $\widetilde{G}_s^+$ in $G_s$ is precisely $G_s^+$ (Borel-Tits, see Proposition \ref{prop:plus-isogeny}), which surjects onto $(G_s^{\text{is}})^+$ . So we conclude that $\Theta$ is dense in $(G_s^{\text{is}})^+$. This completes the proof in the base case that $|S\bs T|=1$.

Now assume that for some $r\geq 1$, the result is true whenever $|S\bs T|\leq r$, and suppose that $|S\bs T|=r+1$. Fix some subset $T\subseteq T_1\subseteq S$ such that $S\bs T_1 = \{s\}$ consists of a single place, let $K_s$ be a maximal compact subgroup of $G_s$ containing $\Lambda$, and set $\Theta_s:=\Theta\cap K_s$. Then $\Theta_s$ contains $\Lambda$. Let $T_2\subseteq T_1$ denote the set of places at which the projection of $\Theta_s$ is unbounded. By the inductive hypothesis, $\Theta_s$ is $T_2$-arithmetic. By the base case (applied with $\Lambda$ replaced by $\Theta_s$), we find that $\Theta$ is then $(T_2\cup\{s\})$-arithmetic.

Finally, note that $T_2\cup\{s\}=S$ because by assumption $S$ consists exactly of those places at which $\Theta$ has unbounded projection, and a $(T_2\cup \{s\})$-arithmetic lattice has unbounded projection precisely to the places in $T_2\cup\{s\}$.\end{proof}

\section{The reducible case} \label{sec:reducible}
In this section, we prove a version of the main theorem for reducible lattices.  For simplicity, we state only the result in characteristic zero. The interested reader can combine the proof here with the partial result in Theorem \ref{thm:char+} to obtain the most general result.

\begin{thm} Let $G_i$, $i\in I$, be finitely many semisimple algebraic groups defined over local fields of characteristic zero. Set $G:=\prod_i G_i$ and let $\Lambda\subseteq G$ be a lattice such that the projection of $\Lambda$ to every factor $G_i$ is an irreducible lattice $\Lambda_i$.

Suppose $\Gamma\subseteq \Lambda$ is a normal subgroup with dense commensurator. Then there is a subset $J\subseteq I$ of factors such that $\Gamma$ has finite index in $\prod_{j\in J} \Lambda_j$.
\label{thm:reducible}
\end{thm}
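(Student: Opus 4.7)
The plan is to reduce to the irreducible case (Main Theorem~\ref{thm:main}) applied factor-by-factor, and then combine the factor-wise information via a commutator trick.

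Let $p_i : G \to G_i$ denote the projection. For each $i \in I$, the projection $\Gamma_i := p_i(\Gamma)$ is a normal subgroup of the irreducible lattice $\Lambda_i$, and its commensurator in $G_i$ contains $p_i(\Delta)$, which is dense in $G_i$ since $\Delta$ is dense in $G$. Main Theorem~\ref{thm:main} applied to each $\Lambda_i$ therefore gives that $\Gamma_i$ is either finite or has finite index in $\Lambda_i$. Set
\[
J := \{i \in I : \Gamma_i \text{ is infinite}\}.
\]
For $i \notin J$ the subgroup $\Gamma \cap \ker(p_i)$ has finite index in $\Gamma$ and is $\Lambda$-invariant, so intersecting over $i \notin J$ yields a finite-index normal subgroup $\Gamma^* \subseteq \Gamma$ contained in $\Lambda_J := \prod_{j \in J} G_j$. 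Replacing $\Gamma$ by $\Gamma^*$, I henceforth assume $\Gamma \subseteq \Lambda_J$.

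The main step is to show $N_j := \Gamma \cap \Lambda_j$ is infinite for every $j \in J$. This is the step I expect to be the main obstacle: \emph{a priori} $\Gamma$ might be ``diagonally'' (Goursat-style) embedded in $\Lambda_J$, with each projection $\Gamma_j$ large yet every $N_j$ trivial; ruling this out seems naively to require superrigidity or induction on $|J|$. The resolution is a direct commutator trick using that $\Lambda$, as a lattice contained in the lattice $\prod_i \Lambda_i$ in $G$, has finite index in $\prod_i \Lambda_i$. For fixed $j \in J$, this implies that $\Lambda' := \Lambda \cap \prod_{i \in (I \setminus J) \cup \{j\}} G_i$ has finite index in $\prod_{i \in (I \setminus J) \cup \{j\}} \Lambda_i$, so $p_j(\Lambda')$ has finite index in $\Lambda_j$. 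For $\lambda \in \Lambda'$ and $\gamma \in \Gamma \subseteq \Lambda_J$, the commutator $[\lambda, \gamma] \in \Gamma$ has trivial coordinate in every factor except the $j$-th (since $\gamma_i = e$ for $i \notin J$ and $\lambda_i = e$ for $i \in J \setminus j$), and therefore lies in $N_j$. Thus $N_j \supseteq [H,H]$ where $H := p_j(\Lambda') \cap \Gamma_j$ has finite index in $\Lambda_j$; since $\Lambda_j$ is an infinite lattice in the semisimple group $G_j$ it is not virtually abelian, so $[H,H]$ (and hence $N_j$) is infinite.

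To finish, $N_j$ is normal in $\Lambda_j$ (by conjugating by lifts to $\Lambda$ of elements of $\Lambda_j$) and has dense commensurator $p_j(\Delta)$ in $G_j$, so a second application of Main Theorem~\ref{thm:main} gives that $N_j$ has finite index in $\Lambda_j$. The subgroups $\{N_j\}_{j \in J}$ sit in distinct direct factors of $\Lambda_J$ and are all contained in $\Gamma$, so their internal direct product $\prod_{j \in J} N_j \subseteq \Gamma$ has finite index in $\Lambda_J = \prod_{j \in J} \Lambda_j$, completing the proof.
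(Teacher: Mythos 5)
Your argument is correct, and it parallels the paper's overall structure (project to each factor, invoke Theorem~\ref{thm:main} to get the dichotomy ``finite or finite index'' for both $p_i(\Gamma)$ and $N_i := \Gamma\cap G_i$, reduce to $\Gamma$ contained in the $J$-factors, and conclude from $\prod_{j\in J} N_j\subseteq\Gamma$ having finite index). Where you genuinely diverge is in the crucial step of ruling out the ``Goursat'' scenario in which $\Gamma$ sits diagonally and some $N_j$ is finite. The paper argues by contradiction: it first replaces $\Lambda$ by $\prod_i\Lambda_i$, then uses residual finiteness of $\Lambda_i$ to pass to a further finite-index subgroup with $N_i=1$, and observes that $\Gamma$ would then be the graph of a homomorphism $q_i(\Gamma)\to\Lambda_i$, which cannot be normal in $\prod_i\Lambda_i$ unless the image is central. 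Your version is a direct, positive construction: taking $\lambda\in\Lambda':=\Lambda\cap\prod_{i\in(I\setminus J)\cup\{j\}}G_i$ and $\gamma\in\Gamma\subseteq\prod_{j\in J}G_j$, the commutator $[\lambda,\gamma]$ survives only in the $j$-th coordinate, lands in $N_j$ by normality, and these commutators generate $[p_j(\Lambda'),p_j(\Gamma)]\supseteq[H,H]$ for a finite-index $H\leq\Lambda_j$, which is infinite since $\Lambda_j$ is not virtually abelian. Both are ultimately the same observation that normality forces commutators into $N_j$, but your packaging avoids residual finiteness, avoids replacing $\Lambda$ by $\prod_i\Lambda_i$, and produces $N_j$ explicitly rather than by contradiction; it is arguably a bit more robust. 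One small thing to clean up: you define $\Lambda_J:=\prod_{j\in J}G_j$ early on but write $\Lambda_J=\prod_{j\in J}\Lambda_j$ at the end; use distinct symbols (say $G_J$ for the group and $\prod_{j\in J}\Lambda_j$ for the lattice) to avoid the collision, and similarly write $N_j:=\Gamma\cap G_j$ rather than $\Gamma\cap\Lambda_j$ (though these coincide once $\Gamma\subseteq\prod_{j\in J}\Lambda_j$).
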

\begin{proof} Note that $\Lambda\subseteq \prod_{i\in I} \Lambda_i$ has finite index (since $\Lambda$ is a lattice and $\prod_i \Lambda_i$ is discrete). Then there is a finite index subgroup of $\Gamma$ that is normalized by $\prod_{i\in I} \Lambda_i$, and hence we can replace $\Lambda$ by $\prod_{i\in I} \Lambda_i$.

Let $p_i:G\to G_i$ denote the projection onto a factor. Then $p_i(\Gamma)$ is a normal subgroup of $\Lambda_i$ that is commensurated by $p_i(\Delta)$, and hence by the Main Theorem \ref{thm:main}, $p_i(\Gamma)$ is either finite or has finite index in $\Lambda_i$.

Let $J$ denote the subset of factors such that $p_i(\Gamma)\subseteq\Lambda_i$ has finite index, and set $G_J=\prod_{j\in J} G_j$. Then $\Gamma\cap G_J$ has finite index in $\Gamma$, and hence a further finite index subgroup $\Gamma'$ is contained in $G_J$ and also normal in $\Lambda$. By replacing $G$ by $G_J$ and $\Gamma$ by $\Gamma'$, we may assume that $p_i(\Gamma)\subseteq \Lambda_i$ has finite index for all $i$. By replacing $\Lambda$ by its finite index subgroup $\prod_i p_i(\Gamma)$, we may assume $p_i(\Gamma)=\Lambda_i$.

We will now show that $\Gamma\subseteq \prod_i \Lambda_i$ has finite index. Indeed, for any $i\in I$, the intersection with a factor $N_i:=\Gamma\cap G_i$ is a normal subgroup of $\Lambda_i$, and it is easy to see $p_i(\Delta)$ commensurates $N_i$: Indeed for $\delta\in\Delta$, the natural map
	$$N_i/(N_i\cap N_i^\delta) \to \Gamma / (\Gamma\cap\Gamma^\delta)$$
is injective because
	\begin{align*}
	N_i\cap (\Gamma \cap \Gamma^\delta) 	&= G_i \cap \Gamma\cap \Gamma^\delta \\
										&= G_i\cap \Gamma \cap (G_i\cap \Gamma)^\delta \\
										&= N_i\cap N_i^\delta.
	\end{align*}										
Therefore by the Main Theorem \ref{thm:main} applied to $N_i$, we obtain that $N_i\subseteq \Lambda_i$ is either finite or finite index. We will argue by contradiction that $N_i$ must be infinite for all $i$, which will complete the proof.

Suppose then that there is $i\in I$ such that $N_i$ is finite. Since $p_i(\Gamma)=\Lambda_i$ is residually finite, there is a finite index subgroup $\Gamma'\subseteq \Gamma$ such that $\Gamma'\cap N_i=1$. By passing to a further finite index subgroup, we can assume that $\Gamma'\subseteq\Lambda$ is normal, and henceforth we replace $\Gamma$ by $\Gamma'$, so that $N_i=1$. It follows that the projection $q_i:G\to \prod_{j\neq i} G_j$ away from $G_i$ restricts to an isomorphism on $\Gamma$, and hence $\Gamma$ is the graph of a surjective homomorphism $q_i(\Gamma)\to \Lambda_i$. But the graph of a homomorphism $\varphi:A\to B$ is not normal in $A\times B$ unless its image is contained in the center of $B$. Since $\Lambda_i$ is not abelian, this is a contradiction.\end{proof}

\bibliographystyle{alpha}
\bibliography{ref}
\end{document}